\begin{filecontents}{exercise.thm}
\def\th@exercise{%
 \normalfont 
 \thm@headpunct{:}%
}
\end{filecontents}

\documentclass{article}
\title{An exact bandit model for the risk-volatility tradeoff}
\author{Max-Olivier Hongler \\EPFL/STI \\ \\Renaud Rivier\\ D\'epartement de math\'ematiques/Universit\'e de Gen\`eve.}
\usepackage{color}
\usepackage{latexsym}
\usepackage{amsmath}
\usepackage{amsfonts}
\usepackage{amssymb}
\usepackage{graphicx}
\usepackage[exercise]{amsthm}
\usepackage{cite}

\usepackage{booktabs}
\usepackage{xcolor}
\usepackage{hyperref}
\hypersetup{
    colorlinks,
    linkcolor={blue!90!black},
    citecolor={green!90!black},
    urlcolor={red!90!black}
}

\newcommand*{\deq}{\mathrel{\vcenter{\baselineskip0.65ex \lineskiplimit0pt \hbox{.}\hbox{.}}}=}
\newcommand{\ee}{\mathrm{e}}
\newcommand{\dd}{\mathrm{d}}
\newcommand{\E}{\mathbb{E}}

\newtheorem{thm}{Theorem}[section]

\newtheorem{cor}[thm]{Corollary}

\newtheorem{lem}[thm]{Lemma}
\newtheorem{exemp}[thm]{Example}
\theoremstyle{remark}
\newtheorem{rmk}{Remark}
\theoremstyle{plain}

\newtheorem*{Definition}{Definition}

\newtheoremstyle{note}
 {3pt}
 {3pt}
 {}
 {}
 {\itshape}
 {:}
 {.5em}
 {}

\theoremstyle{note}

\newtheoremstyle{citing}
 {3pt}
 {3pt}
 {\itshape}
 {}
 {\bfseries}
 {.}
 {.5em}
 {\thmnote{#3}}

\theoremstyle{citing}

\newtheoremstyle{break}
 {9pt}
 {9pt}
 {\itshape}
 {}
 {\bfseries}
 {.}
 {\newline}
 {}

\theoremstyle{break}

\theoremstyle{exercise}

\swapnumbers
\theoremstyle{plain}

\let\lvert=|\let\rvert=|

\addtolength{\textwidth}{8pt}

\begin{document}

\maketitle

\abstract{$\,$

\noindent
We revisit the two-armed bandit (TAB) problem where both arms are driven by diffusive stochastic processes with a common instantaneous reward. We focus on situations where the Radon-Nikodym derivative between the transition probability densities of the first arm with respect to the second is explicitly known. We calculate how the corresponding Gittins' indices behave under such a change of probability measure. This general framework is used to solve the optimal allocation of a TAB problem where the first arm is driven by a pure Brownian motion and the second is driven by a centered super-diffusive non-Gaussian process with variance quadratically growing in time. 
The probability spread due to the super-diffusion introduces an extra risk into the allocation problem. This drastically affects the optimal decision rule.
Our modeling illustrates the interplay between the notions of risk and volatility.
}

 \vspace{0.3cm}
\noindent {\bf Keywords} Sequential stochastic optimization, continuous time multi-armed bandits, diffusion processes, non-Gaussian evolutions, mean preserving spread

 \vspace{0.3cm}
\noindent {\bf MSC2020 classification code:} 60G40, 93E20, 60J60.

\section{Basic problem, motivation and results}

\subsection{Introduction}

Assume you have to decide where to invest among a couple of assets. The assets are represented by stochastic processes. Both processes are zero mean preserving but differ in their volatility and their risk. A higher risk offers the prospect of higher gains but also comes with potentially higher losses. What should the optimal decision be? A heuristic reflection suggests that the answer should depend both on the asset reward structure and on your present wealth; indeed higher on-hand wealth is likely to weaken risk aversion and conversely. In the sequel, we address this question in a stylized manner by relying on the well-known multi-armed bandit formalism.

\noindent
Originally introduced by Robbins \cite{robbins1952some}, bandit algorithms are a central topic of study in mathematical optimization, with useful applications in various fields. 
These algorithms deal with problems where a fixed, limited set of actions, also called {\it arms}, are available, and the goal is to find an optimal strategy to select actions that maximize the cumulative reward. 
A certain level of uncertainty is usually assumed in the model. In some cases, the dynamics of the arms are unknown: a gambler can choose between different slot machines in a casino with no knowledge of the success probability of each machine. If the gambler uncovers the hidden probabilities, they can easily design an optimal strategy by always playing the highest probability arm. Such models have found applications have found numerous applications from the foundational paper of Thompson \cite{thompson2013bandit} about clinical trials, to economics (see \cite{bergemann2006bandit} for a revue) with a particular affinity with internet applications such as recommendation systems \cite{white2013bandit}, AB testing \cite{burtini2015survey} and dynamic pricing \cite{den2015surveys}.
As they provide a concise mathematical formulation of the exploration-exploitation dilemma in the context of Markov decision processes, bandit algorithms have recently benefited from the rise of reinforcement learning (see \cite[Part 1]{sutton2018reinforcement} and \cite[Section 1.1.2]{lattimore2020bandit} for explanations about the similarities and differences).

\noindent
We are here interested in the special case where the arms are real-valued Markovian processes with continuous trajectories. Both the dynamics of the arms, defined by some known Stochastic Differential Equation (SDE) and the reward function, defined as a non-decreasing continuous function, are known to the gambler. 
When an arm is engaged, it evolves according to some known Markovian dynamics and remains frozen otherwise.
The gambler's objective is to construct a real-time betting policy that specifies which arm to engage to maximize her total expected reward which is obtained as the accumulation of all instantaneous rewards.
This type of sequential decision problem is known as the multi-armed bandits (MAB) problem. Pioneered by \cite{W}, the MAB problem has a half-century-long record of publications \cite{BF,GJ }.
Several contributions \cite{M,MM,D} focus on finding optimal allocation policies for MAB with continuous time-diffusive processes.
Gittins showed that one can construct an optimal allocation policy by computing for every arm a deterministic function known as {\it  Gittins' indices} and always engaging the arm with the highest index.
The intuitive interpretation of the Gittins' index for one arm is the smallest instantaneous reward that makes immediate stopping profitable if the gambler's only option is to engage that arm or stop forever. 
As a result, the more expensive it is to convince a gambler to stop playing some game, the more profitable is expected by pursuing the gamble.

\noindent
In their seminal paper, Stiglitz and Rotshild \cite{RS} showed that the notion of volatility can take two different meanings: they distinguish between ordinary volatility (i.e.\ the diffusive coefficient) and the volatility spread relevant to model extra risky situations.  
It is natural to wonder how these two different notions affect the allocation decisions in a two-arm bandit and in particular how they enter into the corresponding Gittins' indices.
In other words, we ask the following question: how should a gambler play if an extra risk is introduced into one of the arms? To answer this question rigorously, we introduce the concept of Dynamic Mean-Preserving Spread (DMPS), distinguishing noisy processes and risky processes.

\noindent
In his famous paper, Karatzas \cite{K} developed the general method to compute Gittins' indices for Markov diffusion processes and derived the explicit expression for the drifted Brownian motion. 
In general, however, explicit expressions for the Gittins' index are unavailable hence the corresponding optimal allocation rules for general TAB. 
We will unveil a new possibility that allows us to compute explicit Gittins' indices for classes of diffusion processes to which the DMPS belongs. In particular, this will allow us to understand the optimal allocation policy for a TAB problem where one arm is a pure Brownian motion (volatile) and the other is a DMPS (risky). 
Ultimately, we pose and answer novel questions regarding the relationship between volatility and risk in the context of bandit algorithms.

\subsection{Model}
\noindent To address the question laid out in the previous section, we will specifically focus on the two-armed bandit problem with arms driven by the SDEs:
\begin{subequations}
  \begin{align}
    \dd X_t^{(1)} &= \sigma_1 \dd W^{(1)}_{t},\label{SDE-A}\\
    \dd X_{ t}^{(2)} &= \left[ \sigma_{2} \sqrt{2 \Gamma} \tanh \left( \frac{\sqrt{2 \Gamma}}{\sigma_2} X_{t}^{(2)}\right) \right] \dd t + \sigma_2 \dd W^{(2)}_{t}, \label{SDE-DMPS}
  \end{align}
\end{subequations}
\noindent with initial conditions $X^{(1)}_0 = X_{ 0}^{(2)} =0$
and where $(W_t^{(1)})_{t\geq 0}$ and $(W_t^{(2)})_{t\geq 0}$ are independent Wiener processes. 
Both arms are continuous stochastic processes with an additive {\it noise}: we call the parameters $\sigma_1, \sigma_2 > 0$ the variances.
The {\it spread}  $\Gamma \in \mathbb{R}^{+}$ models the extra risk by a probability spread. In line with \cite{ARH, RS}, this spread stylizes a risk increase for the arm $X_{t}^{(2)}$ compared to the pure Brownian motion $X_t^{(1)}.$ 
The corresponding transition probability densities (TPD) characterizing the Markovian diffusion processes $X^{(1)}_{t}$ and $X^{(2)}_{t}$ are well known \cite{ARH}:

\begin{subequations}
  \begin{align}
    P^{(1)}(x,t|0,0) &= \frac{1}{\sqrt{2 \pi \sigma_{1} ^{2} t}} \ee^{- \frac{x^{2} }{2\sigma_{1}^{2}t} } ,\label{TPD-BM} \\
    P^{(2)}(x,t|0,0) &= \frac{1}{2\sqrt{2 \pi \sigma_{2}^{2} t}} \left[ \ee^{- \frac{(x- \sigma_{2} ^{2} \sqrt{2\Gamma }t)^{2} }{2\sigma_{2}^{2}t} }  + \ee^{- \frac{(x+ \sigma_{2}^{2} \sqrt{2\Gamma }t)^{2} }{2\sigma_{2}^{2}t} } \right].
    \label{TPD-DMPS}
  \end{align}
\end{subequations}

\noindent The implications of Eq.(\ref{TPD-DMPS}) are fully discussed in \cite{ARH}. In particular, the first moments of Eqs.(\ref{TPD-BM}) and (\ref{TPD-DMPS}) read:

\begin{equation*}
  \begin{array}{l}
  \mathbb{E} \left[ X_t^{(1)} \right] = 0\qquad \,\, \qquad{\rm and} \qquad \qquad \mathbb{E} \left[ X_t^{(2)} \right] =0, \\\\ 
  \mathbb{E} \left[ \left( X_t^{(1)} \right)^{2} \right] = \sigma_{1}^{2} t \qquad{\rm and } \qquad  \mathbb{E} \left[ \left(X_t^{(2)} \right)^{2} \right] = \sigma_{2}^{2} \left[ t + 2 \sigma_{2}^{2}\Gamma t^{2}\right].
  \end{array}
\end{equation*}

\noindent The super-diffusive behavior of $X^{(2)}$ (i.e.\ quadratic growth of the variance) mirrors its DMPS characteristics.
Despite the non-Gaussian character of $X_t^{(2)}$, the corresponding Gittins' index can be derived and so will the exact allocation policy for the MAB problem. Compared to the pure Brownian motion's case, we shall observe that the extra risk strongly changes the optimal policy. 

\noindent Finally, let us note that the SDE Eq.\eqref{SDE-DMPS} appears in the literature in other contexts. In \cite{BL}, the authors gave a short and easy proof of its universal nature in terms of Brownian bridges. It is also studied in \cite[Example 2]{rogers1981markov}.

\begin{figure}[h]
  \begin{center}
    \def\svgwidth{0.9\columnwidth} 
    \small
\begingroup%
  \makeatletter%
  \providecommand\color[2][]{%
    \errmessage{(Inkscape) Color is used for the text in Inkscape, but the package 'color.sty' is not loaded}%
    \renewcommand\color[2][]{}%
  }%
  \providecommand\transparent[1]{%
    \errmessage{(Inkscape) Transparency is used (non-zero) for the text in Inkscape, but the package 'transparent.sty' is not loaded}%
    \renewcommand\transparent[1]{}%
  }%
  \providecommand\rotatebox[2]{#2}%
  \newcommand*\fsize{\dimexpr\f@size pt\relax}%
  \newcommand*\lineheight[1]{\fontsize{\fsize}{#1\fsize}\selectfont}%
  \ifx\svgwidth\undefined%
    \setlength{\unitlength}{422.34780139bp}%
    \ifx\svgscale\undefined%
      \relax%
    \else%
      \setlength{\unitlength}{\unitlength * \real{\svgscale}}%
    \fi%
  \else%
    \setlength{\unitlength}{\svgwidth}%
  \fi%
  \global\let\svgwidth\undefined%
  \global\let\svgscale\undefined%
  \makeatother%
  \begin{picture}(1,0.43927312)%
    \lineheight{1}%
    \setlength\tabcolsep{0pt}%
    \put(0,0){\includegraphics[width=\unitlength,page=1]{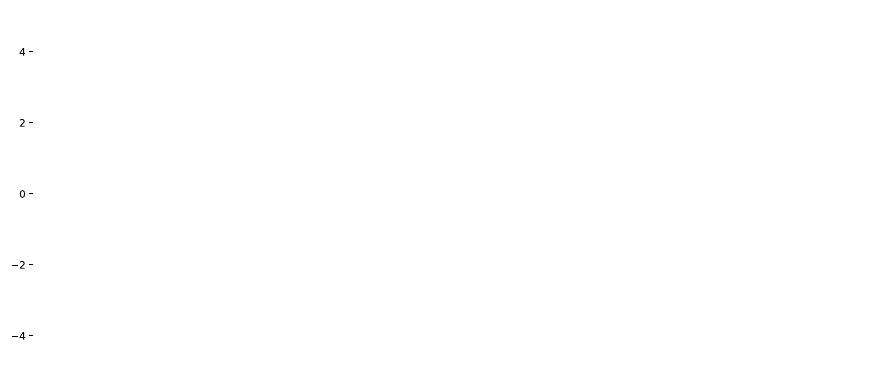}}%
    \put(0.00899457,0.17785657){\color[rgb]{0,0,0}\rotatebox{90}{\makebox(0,0)[lt]{\lineheight{1.14999998}\smash{\begin{tabular}[t]{l}$X^{(2)}_t$\end{tabular}}}}}%
    \put(0.51423341,0.19375907){\color[rgb]{0,0,0}\rotatebox{90}{\makebox(0,0)[lt]{\lineheight{1.14999998}\smash{\begin{tabular}[t]{l}$X^{(1)}_t$\end{tabular}}}}}%
    \put(0,0){\includegraphics[width=\unitlength,page=2]{sample_paths_inkscape.pdf}}%
    \put(0.03332599,0.43027854){\color[rgb]{0,0,0}\makebox(0,0)[lt]{\lineheight{1.14999998}\smash{\begin{tabular}[t]{l}Dynamic Mean Preserving Spread\end{tabular}}}}%
    \put(0,0){\includegraphics[width=\unitlength,page=3]{sample_paths_inkscape.pdf}}%
    \put(0.547786,0.4291311){\color[rgb]{0,0,0}\makebox(0,0)[lt]{\lineheight{1.14999998}\smash{\begin{tabular}[t]{l}Standard Brownian motion\end{tabular}}}}%
  \end{picture}%
\endgroup%
  \end{center}
  \caption{Illustration of sample paths: The red trajectories represent standard Brownian motion (BM), highlighting the stochastic nature of their evolution. For the DMPS, with parameter $\Gamma=0.2$, the {\it escape} from a certain threshold may occur at varying times. Once a certain critical level is surpassed, however, the dynamics of the system precipitate an accelerated divergence from the origin.}
  \label{fig:trajectories}
\end{figure}

\subsection{Results}
In a nutshell, our present address can be summarised as follows:

\begin{itemize}
 \item [a)]  {\bf Gittins' indices transformation under changes of probability measures}. One considers a couple of SDEs:
\begin{equation}
\label{TWO-SDE}
\left\{
\begin{array}{l}
\dd X^{(1)}_t = \mu(X_t) \dd t + \sigma \dd W_t,  \\\\

\dd X^{(2)}_{t} = \mu \left( X^{(2)}_{ t} \right) \dd t  + \left\{\sigma^{2}[\partial_{x} \ln F_{\Gamma} (x) ] \right\}_{x= X^{(2)}_{t}} \dd t + \sigma \dd W_t,
\end{array}
\right.
\end{equation}
\noindent where $\sigma, \Gamma > 0$ are positive constants, $(W_t)_{t \geq 0}$ is a Wiener process and $F_{\Gamma}(x) \geq 0$ solves the Ordinary Differential Equation (ODE):
\begin{equation*}
 \frac{\sigma^{2}}{2} \partial_{xx} F_{\Gamma}(x) + \mu(x) \partial_x F_{\Gamma}(x)  -\Gamma F_{\Gamma}(x) =0.
\end{equation*}
\noindent The Radon-Nikodym derivative provides a way to relate the probability measures given by the TPDs associated to $X^{(i)}$, $i=1,2.$ In particular, we know (see for example \cite[Theorem 2.1]{DAP}):  
\begin{equation*}
 \nu_{x_0}(x, t) \deq \frac{P^{(2)}(x,t \vert x_0, 0)}{P^{(1)}(x,t \vert x_0, 0)}
= \ee^{-\Gamma t} \frac{F_{\Gamma}(x)}{F_{\Gamma}(x_0)}.
\end{equation*}
 
\noindent Hence $P^{(2)}$ derives from  $P^{(1)}$ by the change of probability measure $\nu_{x_0}$ and it is natural to ask how this reflects for the corresponding Gittins' indices (see Theorem \ref{THM:GITTINS-DOOBS} below).

 \item [b)]  {\bf Optimal allocation for a class of non-Gaussian TAB}. The special case obtained from a) with $\mu(x)=0$  and $F_{\Gamma}(x) = \cosh\left[ \frac{\sqrt{2 \Gamma} }{\sigma} x\right]$ in Eq.\eqref{TWO-SDE} corresponds to the TAB defined by Eqs.(\ref{SDE-A}) and (\ref{SDE-DMPS}). 
 We introduce the Gittins' indices difference:
 $$ \Delta (x) \deq \Delta_{\Gamma, \sigma_1, \sigma_2}(x) = M^{(2)}(x) - M^{(1)}(x),$$
 where $M^{(i)}$, $i=1,2$, denotes the Gittins' index of the process $X^{(i)}$ (see Eq.\eqref{GIDEF} below).
Accordingly at location $x$,  it will be optimal to engage the arm $X_t^{(2)}$ if $ \Delta(x) >0$ and vice-versa. 
For $\Gamma >0$, the extra risk introduced into the $X_t^{(2)}$-evolution gives rise to a $x$-dependent sign (see Theorem \ref{OPTALOC}). This is in sharp contrast to the case $\Gamma =0$ where the difference $\Delta$ always carries the same sign.
  
\item [c)]  {\bf Risk spread versus volatility tradeoff}. As a consequence of the analysis of $\Delta$, the optimal strategy undergoes some phase transitions. In the case where both arms are pure diffusions ($\Gamma=0$), there are only two regions in the parameter space: $\sigma_1>\sigma_2$ and $\sigma_1 < \sigma_2$.
As we add risk in the second arm ($\Gamma >0$), a third region in the parameter space emerges (see  Figure \ref{FIGURE1}) where the optimal allocation results not only by comparing the volatilities of both arms. In this intermediate region, the optimal allocation depends on the state of the process. In positions yielding high instantaneous rewards, it becomes preferable to engage the risky arm even if $\sigma_2<\sigma_1$ and conversely. This interplay between volatility and risk (or variance and spread) is to the best of our knowledge new in the context of bandits algorithms.    
\end{itemize}

\noindent The content of the paper is organized as follows: in section \ref{SEC3}, we recall basic notations and basic results for multi-armed bandits. In section \ref{SEC4}, we derive the GI's for arm dynamics obtained by change of probability measures. Section \ref{SEC55} ends the paper by showing the optimal allocation policy for the two-bandit problem with arm's dynamics Eqs.(\ref{SDE-A}) and (\ref{SDE-DMPS}).

\section{Gittins' index for Markov diffusion processes} \label{SEC3}
 
\noindent We start by briefly recalling the formalism of the diffusive multi-armed bandit problem by adopting the notation of I. Karatzas \cite{K}.

\begin{Definition}{\it(Multi-armed diffusive bandit problem)}

\noindent Let $(\Omega, {\Sigma}, \mathbb{P})$ be a probability space endowed with the increasing family of $\sigma$-fields $({\mathcal F}_t: t \geq 0)$. The MAB problem is defined by the following set of elements:

\begin{itemize}

\item[1.] An admissible {\it reward} structure is a 4-tuple $(h, \alpha, k, K)$, where $\alpha > 0$ is called the discounting factor and $h : \mathbb{R} \rightarrow [\alpha k, \alpha K]$ is a strictly increasing function, with bounded first and second derivatives and $$
\lim_{x \rightarrow -\infty} h(x) = \alpha k, 
\quad 
\lim_{x \rightarrow +\infty} h(x) = \alpha K, 
\quad \lim_{\vert x \vert \rightarrow \infty} \vert h'(x)\vert = 0.
$$  

\item[2.]  A collection of {\it $N$ bandit's arms} consisting of $N$ stochastic independent diffusion processes:

\begin{equation*}
\left\{
\begin{array}{l}
\dd X_t^{(j)} =  \mu^{(j)}\bigl( X_t^{(j)}\bigr)  \dd t + \sigma^{(j)}\bigl( X_t^{(j)}\bigr) \dd W_t^{(j)}  , \qquad j=1,2,\cdots , N, \\\\

X_0^{(j)} = 0, 
\end{array}
\right.
\end{equation*}

\noindent where $\mathbf{W}_t \deq \left( W_t^{(1)}, \cdots, W_t^{(2)}\right)$ is a $N$-dimensional Brownian motion adapted to the filtration $(\mathcal F_t : t \geq 0)$,  
$\mu^{(j)} \in C^1(\mathbb{R}, \mathbb{R})$ and $\sigma^{(j)} \in C^1(\mathbb{R}, \mathbb{R}_{>0})$ with $\vert \partial_x\sigma^{(j)}(x) \vert$ bounded. 
The state of the system at time $t$ is written ${\bf X}_t \deq \left( X_t^{(1)}, \cdots, X_t^{(N)}\right)$.

\item[3.]  A set ${\cal A}$ of admissible {\it allocation policies} formed by  progressively measurable single-valued processes $A: \mathbb{R}^{+} \rightarrow \left\{ 1,2 \cdots, N\right\} $. Only one arm is engaged at any time $t$ and the evolution of all disengaged arms remains frozen.

\item[4.]   Under a specific allocation policy $A \in {\cal A}$, the {\it expected cumulative reward} ${\cal J}_{SDE-A}$ collected during an infinite time horizon is given by: 

\begin{equation*}
{\cal J}_{SDE-A} \deq \mathbb{ E} \left\{\int_{0}^{\infty} \ee^{-\alpha s} h\left( X_s^{(A(s))} \right)\dd s \right\},
\end{equation*}

\noindent where the expectation $\mathbb{E} \left\{ \cdot \right\} $ is taken over all the intermittent trajectories realized by the sequentially selected arms under policy $A(t)$.

\end{itemize}
\end{Definition} 

\noindent {\bf Optimal allocation policy $A^{*}$}. 
The {\it optimal allocation policy $A^{*}$} of a MAB is defined by the equality
\begin{equation*}
\displaystyle {\cal J}_{A^{*}} \deq \sup_{ A \in {\cal A} }{\cal J}_{SDE-A} .
\end{equation*}

\noindent
Let $(X_t)_{t \geq 0}$ be a real-valued stochastic process and $(h, \alpha, C, c)$ an admissible reward structure. 
In \cite[Theorem 4.1]{K}, the author defines the Gittins' index of $X$ as 
\begin{align}\label{GIDEF}
   M_{\alpha, X}(x) \deq 
   \sup_{\tau} \frac{\E_x \int_{0}^{\tau} h(X_s) \ee^{-\alpha s} \dd s}{1 - \E_x \ee^{-\alpha \tau}}, 
   \quad 
   x \in \mathbb{R}, 
\end{align} 
where the supremum is taken over all stopping times measurable with respect $\mathcal F_t$. 

\noindent It is established that the optimal allocation policy $A^{*}(t)$ is realized by systematically engaging the arm with the larger GI.
More precisely, if at time $t \geq 0$ the MAB state is ${\bf X}_t = (x_1, x_2, \cdots, x_N)$, then 
\begin{equation*} 
  A^*(t) = \operatorname*{argsup}_i  M_{\alpha, X^{(i)}} (x_i), 
\end{equation*} 
defines an optimal policy (equality is resolved by random tie break). 

\noindent In this article, we will focus on computing Gittins' indices and thus will mostly look at bandits in isolation one by one. In Section \ref{SEC55}, we compare the indices of two different processes to describe an optimal decision rule.

\subsection{Gittins' index for diffusive arm's evolution} 
 
\noindent Consider a single arm evolution :

\begin{equation}
  \label{GENERIC-SDE}
  \dd X_t = \mu(X_t) \dd t + \sigma \dd W_t, \quad X_0 =0,
\end{equation}

\noindent where $\mu$ is a drift function, $\sigma > 0$ is a positive constant \footnote{We limit our discussion to constant diffusion coefficient $\sigma(x)=\sigma$. For scalar processes, one always recovers this situation by introducing an ad-hoc change of variables (known as the Lamperti transform).}and $(W_t)_{t \geq 0} $ is a standard Wiener process. 
The TPD $P(x,t|x_0,0)$ is a solution of the forward Fokker-Planck equation:

\begin{equation}
\label{FOKP}
\left\{
\begin{array}{l}
\partial_{t} P(x,t|x_0,0) = {\mathcal F} [P(x,t|x_0,0)], \quad P(x,0|x_0,0)= \delta(x-x_0), \\\\
{\mathcal F} [\cdot] = -\partial_x \left\{\mu(x) [\cdot] \right\} + \frac{\sigma^{2}}{2} \partial_{xx} [\cdot],
\end{array}
\right.
\end{equation}
  
\noindent where $\delta(x-x_0)$ is the Dirac probability mass. Together with Eqs.\eqref{GENERIC-SDE} and \eqref{FOKP}, we further introduce the infinitesimal generator:

\begin{equation}
\label{INF-GENERATOR}
{\cal L}_{\alpha} [\cdot] =  \frac{\sigma^{2}}{2}\partial_{xx} [\cdot] + \mu(x) \partial_x [\cdot] - \alpha [\cdot] , 
\quad \alpha >0.
\end{equation}
\noindent
For $f,g \in C^1(\mathbb{R})$, we define the {\it Wronskian determinant} as 
$$
W[f, g](x) \deq {\rm Det}\begin{pmatrix}
    f(x)& g(x)\\
      f'(x) &   g'(x)
\end{pmatrix},
\quad x\in \mathbb{R}.
$$

\noindent 
 
\begin{thm}[I. Karatzas \cite{K}]\label{KARA}
\noindent Consider the diffusion Eq.\eqref{GENERIC-SDE} together with an admissible reward structure $(h, \alpha, k, K)$. Suppose in addition that there exists some positive constants $\alpha_0, \beta>0$ such that  
\begin{align}\label{KARA-CONDITION}
  0 < \alpha_0 \leq \alpha-\mu'(x)\leq \beta, \qquad x \in \mathbb{R}.
\end{align}
Then the Gittins' index $M_{\alpha}$ of the process $(X_t)_{t \geq 0}$   reads:
 
\begin{equation}
\label{GITTINS-FORMULA-WRONSKIAN}
M_{\alpha} (x) = \frac{ p_{\alpha}'(x) \varphi_{\alpha}(x) - \varphi_{\alpha}'(x) p_{\alpha}(x) } {- \varphi_{\alpha}'(x) } 
= \frac{W[p_{\alpha}, \varphi_{\alpha}](x)}{W[\,1\,, \,\varphi_{\alpha}](x)} ,
\end{equation}

\noindent where the functions $\varphi_{\alpha}(x)$ and $p_{\alpha}(x)$ satisfy:

\begin{equation*}
{\cal L}_{\alpha}[\varphi_{\alpha}(x)]=0, \quad \quad \displaystyle\lim_{x \rightarrow +\infty} \varphi_{\alpha}(x) =0,
\quad 
{\cal L}_{\alpha}[p_{\alpha}(x)]= h(x).
\end{equation*}

 \end{thm}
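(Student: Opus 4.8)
The plan is to recover Karatzas' \cite{K} argument: reduce the index to a one-parameter family of optimal stopping problems and solve each by a free-boundary (smooth-fit) analysis. Rewriting the denominator in \eqref{GIDEF} as $1-\E_x\ee^{-\alpha\tau}=\alpha\,\E_x\int_0^\tau\ee^{-\alpha s}\dd s$, the quantity $M_\alpha(x)$ is the supremum over stopping times of the ratio of $\E_x\int_0^\tau\ee^{-\alpha s}h(X_s)\dd s$ to $\alpha\,\E_x\int_0^\tau\ee^{-\alpha s}\dd s$. A Dinkelbach-type argument for ratio maximization then identifies $M_\alpha(x)$ with the unique constant $M$ at which $V_M(x):=\sup_\tau\E_x\int_0^\tau\ee^{-\alpha s}\bigl(h(X_s)-\alpha M\bigr)\dd s$ vanishes: indeed $V_M(x)\ge 0$ always (take $\tau\equiv 0$), it is finite (since $h$ is bounded and $\alpha>0$), and it is continuous and strictly decreasing in $M$, with $V_M(x)>0$ exactly when $M_\alpha(x)>M$. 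So it suffices to solve the stopping problem $V_M$ and then locate the $M$ for which $V_M(x)=0$.

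Second, I would fix the geometry of this stopping problem. The running payoff rate $h(y)-\alpha M$ is strictly increasing in $y$, negative for $y<h^{-1}(\alpha M)$ and positive above; combined with hypothesis \eqref{KARA-CONDITION} — which keeps the drift from overwhelming the discount, secures the correct behaviour at $\pm\infty$ of the decaying solution $\varphi_\alpha$ and its growing companion $\psi_\alpha$ of $\mathcal L_\alpha u=0$, and ensures finiteness of the relevant discounted functionals — this forces the continuation region $\mathcal C_M=\{V_M>0\}$ to be a half-line $(b_M,\infty)$ with a finite boundary $b_M$, where $b_M$ depends continuously and monotonically on $M$ and exhausts $\mathbb R$. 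Since $V_M(x)>0\Leftrightarrow x>b_M$, the critical retirement reward is characterized by $b_M=x$: informally, $M_\alpha(x)$ is the retirement value that makes the free boundary of the associated stopping problem pass through the current state $x$.

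Third, I would solve the free-boundary problem on $\mathcal C_M$ and read off \eqref{GITTINS-FORMULA-WRONSKIAN}. On $(b_M,\infty)$ the value $V_M$ solves the Hamilton--Jacobi--Bellman equation $\mathcal L_\alpha V_M=\alpha M-h$, whose particular solutions are affine combinations of $1$ and $p_\alpha$; subtracting one, the remainder is $\mathcal L_\alpha$-harmonic, hence a combination of $\psi_\alpha$ and $\varphi_\alpha$, and boundedness at $+\infty$ kills the $\psi_\alpha$-part. Thus on $(b_M,\infty)$ the function $V_M$ is an explicit combination of $1$, $p_\alpha$ and $\varphi_\alpha$ with a single free constant (the $\varphi_\alpha$-coefficient); the resulting index does not depend on which particular solution $p_\alpha$ of $\mathcal L_\alpha p_\alpha=h$ is used, since the Wronskian in \eqref{GITTINS-FORMULA-WRONSKIAN} is unchanged under $\varphi_\alpha$-shifts of $p_\alpha$. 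Imposing value-matching $V_M(b_M)=0$ and smooth fit $V_M'(b_M)=0$ — legitimate because $\sigma>0$ makes the diffusion non-degenerate — gives two equations; eliminating the free constant and then setting the free boundary equal to the observation point, $b_M=x$, produces exactly the Wronskian quotient \eqref{GITTINS-FORMULA-WRONSKIAN}. It remains to check a posteriori that the function built from $p_\alpha,\varphi_\alpha$ on $(b_M,\infty)$ and extended by $0$ on $(-\infty,b_M]$ is $C^1$, dominates every stopping payoff, and is attained by $\tau=\inf\{t:X_t\le b_M\}$; this verification, via Itô's formula and optional stopping, is a second place where \eqref{KARA-CONDITION} enters, providing the uniform integrability/transversality of the relevant discounted martingale.

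The hard part is exactly this optimal-stopping analysis: establishing that $V_M$ is the value of a well-posed problem with a $C^1$ value function, that the continuation region is genuinely a half-line with finite boundary (rather than empty, or all of $\mathbb R$, or something more complicated), and that smooth pasting holds — all of which lean delicately on \eqref{KARA-CONDITION} together with the admissibility of the reward structure $(h,\alpha,k,K)$ (boundedness of $h$, $|h'|\to 0$ at infinity, bounded $h'$ and $h''$). A secondary and more routine point is justifying the Dinkelbach reduction, i.e.\ that the supremum of the ratio equals the root of $M\mapsto V_M(x)$; this is immediate once $V_M$ is known to be continuous and strictly decreasing in $M$ with a single zero.
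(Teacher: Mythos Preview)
Your sketch is a faithful outline of Karatzas' original argument (the parametric retirement problem, half-line continuation region, smooth-fit free-boundary analysis, and verification), which is exactly what the paper invokes: the paper does not reproduce any proof here but simply refers the reader to \cite[Section 3]{K}. So your proposal is correct and matches the paper's approach, only with considerably more detail than the paper itself provides.
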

 
\begin{proof}[Proof of Theorem \ref{KARA}]
 The entire proof is exposed in  \cite[Section 3]{K}.
 \end{proof}

\noindent We propose an alternative analytic form of the Gittins index in terms of solutions to Ordinary Differential Equations (ODEs) corresponding to the infinitesimal generator $\cal L_{\alpha}$, $\alpha>0.$  

\begin{cor}\label{COR:ALTERNATIVE-GTTINS}
The Gittins' index defined in Eq.(\ref{GITTINS-FORMULA-WRONSKIAN}) can alternatively be written as:
  
\begin{equation}
\label{ALTERF}
M_{\alpha}(x) =\frac{W[\varphi_{\alpha}, \eta_{\alpha}] (x)}{W[ \,\varphi_{\alpha},1] (x)} \int_{x}^{\infty} \frac{2h(s)\varphi_{\alpha} (s)\dd s}{\sigma^{2}W[\varphi_{\alpha}, \eta_{\alpha}] (s)} ,
\end{equation}
\noindent with 

\begin{equation}
\label{HOMOG}
\left\{
\begin{array}{l}
{\cal L}_{\alpha}[\varphi_{\alpha}(x)]= {\cal L}_{\alpha}[\eta_{\alpha}(x)]=0, \quad 
W[\varphi_{\alpha}, \eta_{\alpha}](x) \neq 0, \quad x \in \mathbb{R}, \\\\

 \displaystyle\lim_{x \rightarrow +\infty} \varphi_{\alpha}(x) =0 \qquad {\rm and} \qquad \displaystyle\lim_{x \rightarrow -\infty} \eta_{\alpha}(x) =0.
\end{array}\right.
\end{equation}

 \end{cor}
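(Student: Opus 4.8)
The plan is to reorganise Karatzas' expression \eqref{GITTINS-FORMULA-WRONSKIAN} and then to ``reduce the order'' of the particular solution $p_\alpha$ against the homogeneous pair $\{\varphi_\alpha,\eta_\alpha\}$. First I would record the two elementary identities $W[1,\varphi_\alpha]=\varphi_\alpha'=-\,W[\varphi_\alpha,1]$ and $W[p_\alpha,\varphi_\alpha]=-\,W[\varphi_\alpha,p_\alpha]$, which recast \eqref{GITTINS-FORMULA-WRONSKIAN} as
\[
  M_\alpha(x)=\frac{W[\varphi_\alpha,p_\alpha](x)}{W[\varphi_\alpha,1](x)} .
\]
Setting $q\deq W[\varphi_\alpha,p_\alpha]$ and $w\deq W[\varphi_\alpha,\eta_\alpha]$ (nowhere zero, by \eqref{HOMOG}), a comparison with \eqref{ALTERF} shows that it is enough to prove the single identity
\[
  \frac{q(x)}{w(x)}=\int_{x}^{\infty}\frac{2h(s)\varphi_\alpha(s)}{\sigma^{2}\,w(s)}\,\dd s ,
\]
since multiplying both sides by $w/W[\varphi_\alpha,1]$ then reproduces \eqref{ALTERF} verbatim.

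The computational heart is a pair of first-order relations. Differentiating $q=\varphi_\alpha p_\alpha'-\varphi_\alpha' p_\alpha$, and substituting $\varphi_\alpha''$ and $p_\alpha''$ from $\mathcal{L}_\alpha[\varphi_\alpha]=0$ and from the equation $\mathcal{L}_\alpha[p_\alpha]=-h$ obeyed by the particular solution of Theorem~\ref{KARA} (the ``play-forever'' reward $p_\alpha(x)=\E_x\!\int_0^\infty \ee^{-\alpha s}h(X_s)\,\dd s$), the $\alpha$- and $p_\alpha$-terms cancel in pairs and one is left with
\[
  q'(x)=-\frac{2\mu(x)}{\sigma^{2}}\,q(x)-\frac{2h(x)\varphi_\alpha(x)}{\sigma^{2}} .
\]
The same manipulation with $(p_\alpha,h)$ replaced by $(\eta_\alpha,0)$ is just Abel's identity $w'(x)=-\tfrac{2\mu(x)}{\sigma^{2}}w(x)$. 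Combining the two,
\[
  \Big(\frac{q}{w}\Big)'(x)=\frac{q'w-qw'}{w^{2}}(x)=-\frac{2h(x)\varphi_\alpha(x)}{\sigma^{2}\,w(x)} ,
\]
and integrating from $x$ to $+\infty$ produces the identity displayed above, hence \eqref{ALTERF}, provided the boundary term $q(x)/w(x)$ tends to $0$ as $x\to+\infty$ (this same limit also secures the convergence of the integral in \eqref{ALTERF}).

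The step I expect to be the real obstacle is precisely the justification of $\lim_{x\to+\infty}q(x)/w(x)=0$. For this I would lean on the asymptotics behind Theorem~\ref{KARA}: the hypothesis \eqref{KARA-CONDITION} forces $\varphi_\alpha$ and $\varphi_\alpha'$ to decay and $\eta_\alpha$ to grow as $x\to+\infty$, while admissibility of the reward ($h$ valued in $[\alpha k,\alpha K]$) keeps $p_\alpha$ and $p_\alpha'$ bounded; dividing the numerator and denominator of $q/w=(\varphi_\alpha p_\alpha'-\varphi_\alpha' p_\alpha)/(\varphi_\alpha\eta_\alpha'-\varphi_\alpha'\eta_\alpha)$ by $-\varphi_\alpha'$ then reduces the limit to $\lim_{x\to+\infty}(p_\alpha/\eta_\alpha)=0$. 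A more computational variant that bypasses this limit is to construct $p_\alpha$ explicitly: solving $\mathcal{L}_\alpha[p_\alpha]=-h$ by variation of parameters against $\{\varphi_\alpha,\eta_\alpha\}$ and fixing the two integration constants by requiring $p_\alpha$ bounded at $\pm\infty$ forces
\[
  p_\alpha(x)=\varphi_\alpha(x)\int_{-\infty}^{x}\frac{2h(s)\eta_\alpha(s)}{\sigma^{2}\,w(s)}\,\dd s
  +\eta_\alpha(x)\int_{x}^{\infty}\frac{2h(s)\varphi_\alpha(s)}{\sigma^{2}\,w(s)}\,\dd s ,
\]
and inserting this into $W[\varphi_\alpha,p_\alpha]$ makes the two double-integral cross terms cancel, leaving $q(x)=w(x)\int_{x}^{\infty}\frac{2h(s)\varphi_\alpha(s)}{\sigma^{2}w(s)}\,\dd s$, i.e.\ \eqref{ALTERF} once more. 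Finally I would note that \eqref{ALTERF} does not depend on which homogeneous solution $\eta_\alpha$ allowed by \eqref{HOMOG} is picked, since $\eta_\alpha$ enters $w$ both in the prefactor and in the integrand denominator and cancels out.
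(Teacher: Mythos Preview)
Your proposal is correct, and your ``computational variant'' is exactly the paper's proof: the paper isolates the variation-of-parameters representation of $p_\alpha$ as a separate lemma and then substitutes it into Eq.~\eqref{GITTINS-FORMULA-WRONSKIAN}, whereupon the $\Pi_1$-terms cancel and $W[\varphi_\alpha,p_\alpha]=w\,\Pi_2$ drops out just as you describe.

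Your primary route --- computing $(q/w)'$ via the ODEs for $\varphi_\alpha$ and $p_\alpha$ together with Abel's identity for $w$, then integrating --- is a legitimate and slightly slicker alternative that never writes $p_\alpha$ out explicitly. The cost is the boundary limit $q/w\to 0$ at $+\infty$, which the paper's approach avoids by hard-wiring the correct decay into the choice of integration limits in the variation-of-parameters formula. Both routes encode the same cancellation; yours packages it as a first-order ODE for $q/w$, the paper's as an algebraic simplification after substitution.

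One point of friction: the paper records the inhomogeneous equation as $\mathcal L_\alpha[p_\alpha]=h$, not your $\mathcal L_\alpha[p_\alpha]=-h$. Your sign is the one consistent with $p_\alpha(x)=\E_x\!\int_0^\infty \ee^{-\alpha s}h(X_s)\,\dd s$ via Feynman--Kac, and it is also the sign that makes the cross terms cancel to give \eqref{ALTERF} as stated; just be aware of the discrepancy if you cite the paper's lemma directly.
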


\noindent To establish Corollary \ref{COR:ALTERNATIVE-GTTINS}, we recall the following classical result:

\begin{lem}\label{LEM:FIRST-TECHNICAL}
\noindent The solution $p_{\alpha}$ of the inhomogeneous ODE:

\begin{equation*}
\left\{
\begin{array}{l}
{\cal L} _{\alpha} [p_{\alpha}(x) ]  = h(x),\\\\

{\cal L} _{\alpha} [\cdot] \deq \frac{\sigma^{2}}{2}\partial_{xx} [\cdot] + \mu(x) \partial_{x} [ \cdot] - \alpha [\cdot], \quad x \in\mathbb{R},

\end{array}
\right.
\end{equation*}

\noindent can be written as:

\begin{equation}
\label{EQ:PALPHA-INTEGRAL}
p_{\alpha}(x) = - \varphi_{\alpha}(x) \int_{-\infty}^{x} \frac{2 h(s) \eta_{\alpha}(s)\dd s}{\sigma^{2} W[\varphi_{\alpha}, \eta_{\alpha}] (s)} +
\eta_{\alpha}(x) \int_{x}^{\infty} \frac{2 h(s) \varphi_{\alpha}(s)\dd s}{\sigma^{2} W[\varphi_{\alpha}, \eta_{\alpha}] (s)}, 
\end{equation}

\noindent where $\varphi_{\alpha}(x)$ and $\eta_{\alpha}(x)$ satisfy the homogeneous Eq.(\ref{HOMOG}).

\end{lem}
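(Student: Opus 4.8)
The plan is to run the classical variation-of-parameters construction for the inhomogeneous second-order linear ODE ${\cal L}_\alpha[p_\alpha]=h$, taking as fundamental system the two distinguished homogeneous solutions $\varphi_\alpha$ (decaying at $+\infty$) and $\eta_\alpha$ (decaying at $-\infty$) from \eqref{HOMOG}; since their Wronskian $W[\varphi_\alpha,\eta_\alpha]$ is assumed nowhere zero, they are linearly independent at every point. First I would posit $p_\alpha(x)=u(x)\varphi_\alpha(x)+v(x)\eta_\alpha(x)$ and impose the usual auxiliary relation $u'(x)\varphi_\alpha(x)+v'(x)\eta_\alpha(x)=0$, so that $p_\alpha'(x)=u(x)\varphi_\alpha'(x)+v(x)\eta_\alpha'(x)$ with no derivatives of $u,v$ present.

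Substituting $p_\alpha$ and $p_\alpha'$ into ${\cal L}_\alpha[p_\alpha]=h$ and regrouping, the coefficients multiplying $u$ and $v$ are exactly ${\cal L}_\alpha[\varphi_\alpha]$ and ${\cal L}_\alpha[\eta_\alpha]$, which vanish; what remains is $\tfrac{\sigma^2}{2}\bigl(u'\varphi_\alpha'+v'\eta_\alpha'\bigr)=h$. Together with the constraint this is a $2\times2$ linear system for $(u',v')$ whose determinant is precisely $W[\varphi_\alpha,\eta_\alpha]$, so Cramer's rule gives $u'$ and $v'$ as the explicit multiples $-2h\eta_\alpha/(\sigma^2 W[\varphi_\alpha,\eta_\alpha])$ and $2h\varphi_\alpha/(\sigma^2 W[\varphi_\alpha,\eta_\alpha])$. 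Integrating, with the limits chosen as $-\infty$ for $u$ and $+\infty$ for $v$ (the choice rendering the integrals convergent) and multiplying back by $\varphi_\alpha$, $\eta_\alpha$, yields the two integral terms of \eqref{EQ:PALPHA-INTEGRAL}. To close the argument I would differentiate the resulting $p_\alpha$ twice and check directly that everything collapses to $-{\cal L}_\alpha[\varphi_\alpha]\int_{-\infty}^{x}(\cdots)\,\dd s-{\cal L}_\alpha[\eta_\alpha]\int_{x}^{\infty}(\cdots)\,\dd s+h=h$, the surviving $h$ being produced when $\partial_x$ hits the variable limits of integration and the factor $W[\varphi_\alpha,\eta_\alpha]$ cancels against the $1/W[\varphi_\alpha,\eta_\alpha]$ in the integrands; this is a routine verification I would record but not grind through.

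The step I expect to carry the real weight is justifying that the improper integrals in \eqref{EQ:PALPHA-INTEGRAL} converge, and that this particular choice of limits singles out the distinguished (bounded) particular solution relevant to Theorem \ref{KARA} rather than an arbitrary element of the two-parameter solution family. Here I would use that $h$ is bounded, that $\varphi_\alpha\to0$ at $+\infty$ and $\eta_\alpha\to0$ at $-\infty$, and Abel's identity, which for ${\cal L}_\alpha$ reads $\tfrac{\dd}{\dd x}W[\varphi_\alpha,\eta_\alpha](x)=-\tfrac{2\mu(x)}{\sigma^2}\,W[\varphi_\alpha,\eta_\alpha](x)$; combined with the coercivity bound \eqref{KARA-CONDITION} this controls the behaviour of $\varphi_\alpha/W[\varphi_\alpha,\eta_\alpha]$ near $+\infty$ and of $\eta_\alpha/W[\varphi_\alpha,\eta_\alpha]$ near $-\infty$ and forces absolute convergence. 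An equivalent and arguably cleaner packaging is to build the Green's function $G_\alpha(x,s)$ of ${\cal L}_\alpha$ on $\mathbb{R}$ decaying at both ends, fixing its two branches by continuity at $x=s$ and the jump condition $\tfrac{\sigma^2}{2}\bigl[\partial_x G_\alpha\bigr]_{s^-}^{s^+}=1$, and then writing $p_\alpha(x)=\int_{\mathbb{R}}G_\alpha(x,s)\,h(s)\,\dd s$; this makes the selection of the relevant solution transparent. Since the identity is classical, citing a standard ODE reference for the variation-of-parameters formula is also a legitimate shortcut, leaving only the asymptotic/integrability discussion genuinely specific to the hypotheses at hand.
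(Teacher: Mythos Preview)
Your proposal is correct and follows essentially the same approach as the paper: the paper simply declares the result classical, writes down the two integrals $\Pi_1,\Pi_2$, and says ``by direct substitution, one easily verifies'' that ${\cal L}_\alpha[-\varphi_\alpha\Pi_1+\eta_\alpha\Pi_2]=h$. Your variation-of-parameters derivation and direct verification are exactly this, only spelled out; the convergence/integrability discussion and Green's-function reformulation you add are extra commentary the paper does not include.
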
 


\begin{proof}
\noindent This is a classical result for inhomogeneous second-order linear ODE. Define:

$$
\Pi_1(x) \deq \int_{-\infty}^{x} \frac{2 h(s) \eta_{\alpha} (s)\dd s }{\sigma^{2} W [ \varphi_{\alpha}, \eta _{\alpha}] (s)} 
\quad {\rm and} \quad 
\Pi_2(x) \deq \int_{x}^{\infty} \frac{2 h(s) \varphi_{\alpha} (s)\dd s }{\sigma^{2} W [ \varphi_{\alpha}, \eta _{\alpha}] (s)} ,
$$

\noindent and by direct substitution, one easily verifies that:
\begin{equation}
\label{PALF}
 {\cal L}_{\alpha}[-\varphi_{\alpha} (x) \Pi_1(x) + \eta_{\alpha} (x) \Pi_2(x) ] = {\cal L}_{\alpha} [p_{\alpha}(x) ] =h(x) , \\\\
\end{equation}
\end{proof} 

\begin{proof}[Proof of Corollary \ref{COR:ALTERNATIVE-GTTINS}]
Using Eq.\eqref{EQ:PALPHA-INTEGRAL}, we rewrite $p_{\alpha}$ in Eq.(\ref{PALF}). Plugging the later into Eq.(\ref{GITTINS-FORMULA-WRONSKIAN}), we obtain Eq.(\ref{ALTERF}).
\end{proof}

\begin{rmk}
\noindent Note that Eq.(\ref{ALTERF}) also appears in \cite[Eq.(3.17)]{K}. 
\end{rmk}

\begin{exemp}[Gittins' index for the Brownian motion]
 For the Brownian motion Eq.(\ref{SDE-A}), we have:
 
 $$
 \left\{
\begin{array}{l}
 {\cal L}_{\alpha} [\cdot] = \frac{\sigma_1^{2}}{2} \partial_{xx} [\cdot] -\alpha[\cdot], \\\\
 \varphi_{\alpha}(x) = \ee^{- \frac{\sqrt{2 \alpha}}{\sigma_1} x}\quad {\rm and}\quad \eta_{\alpha}(x) = \ee^{ \frac{\sqrt{2 \alpha}}{\sigma_1} x},
 \\\\
W[\varphi_{\alpha}, \eta_{\alpha}](x) = \frac{2 \sqrt{2\alpha}}{\sigma_1} \quad {\rm and}\quad W[ \varphi_{\alpha},1](x) = \frac{\sqrt{2\alpha} }{\sigma_1} \ee^{- \frac{\sqrt{2 \alpha}}{\sigma_1}x} .

 \end{array}
 \right.
 $$

\noindent Accordingly,  from Eq.(\ref{ALTERF}) we have :
\begin{equation}
\label{PUREBrownian motion}
M_{\alpha}(x) = 
\frac{\sigma_1}{\sqrt{2 \alpha}} \int_{x}^{\infty} \frac{2h(s)}{\sigma_{1}^{2} } \ee^{- \frac{\sqrt{2\alpha}}{\sigma_1} [ s-x]}\dd s=
 \frac{1}{\alpha}\int_{0}^{\infty} h \left[ x + \frac{\sigma_1 z}{\sqrt{2 \alpha}} \right] \ee^{-z} \dd z, \end{equation}
 
\noindent in agreement with \cite[Eq.(3.22)]{K}.
\end{exemp}
\section{Gittins' indices under change of probability measures} \label {SEC4}

\noindent In this section, we look at how the Gittins' index can be computed using a change of measure. 

\noindent Let us consider the scalar diffusion introduced in Eq.(\ref{GENERIC-SDE}) with infinitesimal generator $\mathcal{L}_{\alpha}$, $\alpha>0$,  defined in Eq.(\ref{INF-GENERATOR}), and homogeneous solutions $\varphi_{\alpha}$ and $\eta_{\alpha}$ satisfying Eq.\eqref{HOMOG}.  
For a set of real constants $p, q \in \mathbb{R}$ and $\Gamma > 0$, assume that we have a positive definite function $F_{\Gamma}:\mathbb{R}\rightarrow\mathbb{R}_{>0}$ which satisfies: 

\begin{equation}
\label{FDEF}
F_{\Gamma}(x) \deq p \varphi_{\Gamma}(x) + q \eta_{\Gamma}(x) >0,
\quad x \in \mathbb{R},
\end{equation}
where $\varphi_{\Gamma}$ and $\eta_{\Gamma}$ satisfy Eq.\eqref{HOMOG} for the generator $\mathcal{L}_{\Gamma}.$  
 Our main result reads:

\begin{thm}[Gittins' index and change of probability measure] \label{THM:GITTINS-DOOBS}
Let $\Gamma, \alpha>0$ be positive constants and $(h, \alpha, k, K)$ an admissible reward structure. Let $(X_t)_{t \geq 0}$ be the solution of the SDE:

 \begin{equation}
\label{MODIFFU}
 \dd X_{t} = m(X_t) \dd t + \sigma \dd W_t, 
 \quad 
 m(x) \deq \mu(x)+\sigma^2\partial_x\left\{\ln F_{\Gamma}(x)\right\},
\end{equation}

\noindent where the drift $m$ satisfies
\begin{align*}
   0<\alpha_0<\alpha - m'(x)<b,
\end{align*}
for some positive constants $\alpha_0,b>0.$ 
Then the corresponding Gittins' index $M_{\alpha, \Gamma}(x)$  reads :

\begin{equation}
\label{GITTINS-WRONSK-DOOBS}
M_{\alpha, \Gamma} (x) = \frac{W[\varphi_{\alpha+ \Gamma}, \eta_{\alpha+ \Gamma}] (x)}{W[ \varphi_{\alpha+ \Gamma}, F_{\Gamma}] (x)} \int_{x}^{\infty} \frac{2h(s)\varphi_{\alpha+ \Gamma} (s) F_{\Gamma}(s)\dd s}{\sigma^{2}W[\varphi_{\alpha+ \Gamma}, \eta_{\alpha+ \Gamma}] (s)},
\end{equation}

\noindent where $\varphi_{\alpha+ \Gamma}$ and $\eta_{\alpha+ \Gamma}$ satisfy Eq.\eqref{HOMOG} for the generator $\mathcal{L}_{\alpha+\Gamma}.$  

\end{thm}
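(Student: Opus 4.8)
The plan is to read the SDE \eqref{MODIFFU} as a Doob $h$-transform of the base diffusion \eqref{GENERIC-SDE} with $h=F_\Gamma$, to turn that transform into a conjugation of infinitesimal generators, and then to feed the transformed generator into the alternative Gittins formula of Corollary \ref{COR:ALTERNATIVE-GTTINS}. Write $\widetilde{\mathcal L}_\alpha[\,\cdot\,]\deq\frac{\sigma^2}{2}\partial_{xx}[\,\cdot\,]+m(x)\partial_x[\,\cdot\,]-\alpha[\,\cdot\,]$ for the generator attached to \eqref{MODIFFU}. I would first establish that $\widetilde{\mathcal L}_\alpha$ is conjugate, through multiplication by $F_\Gamma$, to $\mathcal L_{\alpha+\Gamma}$; the decaying homogeneous solutions $\varphi_{\alpha+\Gamma},\eta_{\alpha+\Gamma}$ of $\mathcal L_{\alpha+\Gamma}$ then supply the functions $\widetilde\varphi_\alpha=\varphi_{\alpha+\Gamma}/F_\Gamma$ and $\widetilde\eta_\alpha=\eta_{\alpha+\Gamma}/F_\Gamma$ required by Corollary \ref{COR:ALTERNATIVE-GTTINS}, and \eqref{GITTINS-WRONSK-DOOBS} falls out after some Wronskian bookkeeping. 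The standing hypothesis $0<\alpha_0<\alpha-m'(x)<b$ is precisely what licenses the application of Corollary \ref{COR:ALTERNATIVE-GTTINS} to the diffusion \eqref{MODIFFU}.

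\textbf{The conjugation identity.} For $g\in C^2(\mathbb R)$ I would prove
\begin{equation*}
\widetilde{\mathcal L}_\alpha[g](x)=\frac{1}{F_\Gamma(x)}\,\mathcal L_{\alpha+\Gamma}\!\left[F_\Gamma\,g\right](x).
\end{equation*}
This is a short computation: expanding $\mathcal L_{\alpha+\Gamma}[F_\Gamma g]$ with the Leibniz rule, the terms proportional to $g$ collapse to the factor $\tfrac{\sigma^2}{2}F_\Gamma''+\mu F_\Gamma'-(\alpha+\Gamma)F_\Gamma$, which equals $-\alpha F_\Gamma$ because $F_\Gamma=p\varphi_\Gamma+q\eta_\Gamma$ satisfies $\mathcal L_\Gamma[F_\Gamma]=0$; the surviving terms reassemble into $F_\Gamma\,\widetilde{\mathcal L}_\alpha[g]$ once one inserts $m=\mu+\sigma^2F_\Gamma'/F_\Gamma$. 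Consequently $\widetilde{\mathcal L}_\alpha[g]=0$ if and only if $\mathcal L_{\alpha+\Gamma}[F_\Gamma g]=0$, so the homogeneous solution space of $\widetilde{\mathcal L}_\alpha$ is exactly $\{u/F_\Gamma:\mathcal L_{\alpha+\Gamma}[u]=0\}$, which is spanned by $\varphi_{\alpha+\Gamma}/F_\Gamma$ and $\eta_{\alpha+\Gamma}/F_\Gamma$.

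\textbf{Boundary behaviour — the main obstacle.} To invoke Corollary \ref{COR:ALTERNATIVE-GTTINS} for the diffusion \eqref{MODIFFU} I need homogeneous solutions of $\widetilde{\mathcal L}_\alpha$ that decay at $+\infty$ (resp.\ $-\infty$) and whose Wronskian never vanishes; I would take $\widetilde\varphi_\alpha\deq\varphi_{\alpha+\Gamma}/F_\Gamma$ and $\widetilde\eta_\alpha\deq\eta_{\alpha+\Gamma}/F_\Gamma$. The hard part is verifying the decay. When $F_\Gamma$ is bounded away from $0$ near the relevant endpoint — in particular whenever $F_\Gamma\to+\infty$, which is the generic case and the one occurring in the application $F_\Gamma(x)=\cosh(\sqrt{2\Gamma}\,x/\sigma)$ — the claim is immediate from $\varphi_{\alpha+\Gamma}\to0$ at $+\infty$ and $\eta_{\alpha+\Gamma}\to0$ at $-\infty$. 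In the residual case where $F_\Gamma$ itself vanishes at an endpoint (which forces $F_\Gamma$ to be, near that endpoint, proportional to the decaying base solution $\varphi_\Gamma$, resp.\ $\eta_\Gamma$), one needs that $\varphi_{\alpha+\Gamma}$ decays strictly faster than $\varphi_\Gamma$ because $\alpha+\Gamma>\Gamma$; this is a Sturm-type comparison for $\tfrac{\sigma^2}{2}u''+\mu u'=\beta u$, or equivalently the probabilistic estimate $\varphi_{\alpha+\Gamma}(x)/\varphi_\Gamma(x)=\mathbb E_x[e^{-(\alpha+\Gamma)T_a}]/\mathbb E_x[e^{-\Gamma T_a}]\to0$ as $x\to+\infty$, with $T_a$ the hitting time of a fixed level and $+\infty$ inaccessible by the hypothesis on $m'$ (equivalently on $\mu'$). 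Either way, $\widetilde\varphi_\alpha\to0$ at $+\infty$ and $\widetilde\eta_\alpha\to0$ at $-\infty$, so these are the two solutions named in Corollary \ref{COR:ALTERNATIVE-GTTINS}.

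\textbf{Wronskian bookkeeping and conclusion.} From the elementary identities $W[u/F,v/F]=W[u,v]/F^2$ and $W[u/F,\,1]=W[u,F]/F^2$, valid for $F=F_\Gamma>0$, I get
\begin{equation*}
W[\widetilde\varphi_\alpha,\widetilde\eta_\alpha]=\frac{W[\varphi_{\alpha+\Gamma},\eta_{\alpha+\Gamma}]}{F_\Gamma^{2}},
\qquad
W[\widetilde\varphi_\alpha,\,1]=\frac{W[\varphi_{\alpha+\Gamma},F_\Gamma]}{F_\Gamma^{2}},
\end{equation*}
and the first is nowhere zero since $\varphi_{\alpha+\Gamma}$ and $\eta_{\alpha+\Gamma}$ are linearly independent (they decay at opposite ends) and $F_\Gamma>0$, which discharges the remaining hypothesis of the previous step. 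Substituting $\widetilde\varphi_\alpha$, $\widetilde\eta_\alpha$ and these Wronskians into the formula of Corollary \ref{COR:ALTERNATIVE-GTTINS} (the diffusion coefficient being still $\sigma$), the $F_\Gamma^{2}$ factors cancel in the ratio of Wronskians while one factor $F_\Gamma(s)$ survives alongside $\varphi_{\alpha+\Gamma}(s)$ in the integrand, and the expression reduces exactly to \eqref{GITTINS-WRONSK-DOOBS}. I expect the boundary analysis to be the only genuinely delicate point; the generator conjugation and the final simplification are routine.
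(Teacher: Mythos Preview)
Your proposal is correct and follows essentially the same route as the paper: the paper packages your conjugation identity as Lemma~\ref{LEM:FIRST-TECHNICAL-DOOBS} (proved there by direct substitution rather than via the one-line Doob-transform identity $\widetilde{\mathcal L}_\alpha[g]=F_\Gamma^{-1}\mathcal L_{\alpha+\Gamma}[F_\Gamma g]$), your Wronskian bookkeeping as Lemma~\ref{LEM:SECOND-TECHNICAL-DOOBS}, and then plugs both into Corollary~\ref{COR:ALTERNATIVE-GTTINS} exactly as you do. Your discussion of the endpoint decay of $\varphi_{\alpha+\Gamma}/F_\Gamma$ and $\eta_{\alpha+\Gamma}/F_\Gamma$ is more careful than the paper's, which simply asserts the limits in the statement of Lemma~\ref{LEM:FIRST-TECHNICAL-DOOBS} without further argument.
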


\noindent The proof of Theorem \ref{THM:GITTINS-DOOBS} will follow from two intermediate results.


\begin{lem} \label{LEM:FIRST-TECHNICAL-DOOBS}
Let $\Gamma, \alpha >0$ be positive constants, $F:\mathbb{R}\rightarrow \mathbb{R}_{>0}$ be as in Eq.\eqref{FDEF} and 
\begin{equation*}
  {\cal L} _{\alpha, \Gamma} [\cdot] \deq \frac{\sigma^{2}}{2}\partial_{xx} [\cdot] + \left\{ \mu(x) + \sigma^{2}\partial_x \ln [F_{\Gamma}(x) ] \right\} \partial_{x} [ \cdot] - \alpha [\cdot],
\end{equation*}  
be an infinitesimal generator.
Let $\varphi_{\alpha, \Gamma}, \eta_{\alpha, \Gamma}: \mathbb{R} \rightarrow \mathbb{R}$ satisfy:

\begin{equation*}
\left\{
\begin{array}{l}
{\cal L} _{\alpha, \Gamma} [\varphi_{\alpha, \Gamma}(x) ]  = {\cal L} _{\alpha, \Gamma} [\eta_{\alpha, \Gamma}(x) ]= 0,
\quad W[\varphi_{\alpha, \Gamma}, \eta_{\alpha, \Gamma}](x) \neq 0, \quad x \in \mathbb{R}, \\\\
\displaystyle \lim_{x \rightarrow +\infty} \varphi_{\alpha, \Gamma}(x)=0\quad and \quad \displaystyle \lim_{x \rightarrow -\infty} \eta_{\alpha, \Gamma}(x)=0 
\end{array}
\right.
\end{equation*}

\noindent Then we have:

\begin{equation*}
\varphi_{\alpha, \Gamma}(x) = \frac{\varphi_{\alpha +\Gamma} (x) }{F_{\Gamma} (x)} \quad {\rm and } \quad \eta_{\alpha, \Gamma}(x) = \frac{\eta_{\alpha +\Gamma} (x) }{F_{\Gamma} (x)},
\quad x \in \mathbb{R},
\end{equation*}

\noindent where $\varphi_{\alpha+ \Gamma}$ and $\eta_{\alpha+ \Gamma}$ satisfy Eq.\eqref{HOMOG} for the generator $\mathcal{L}_{\alpha+\Gamma}.$  
\end{lem}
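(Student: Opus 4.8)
The plan is to verify directly that the functions $\varphi_{\alpha+\Gamma}/F_\Gamma$ and $\eta_{\alpha+\Gamma}/F_\Gamma$ solve ${\cal L}_{\alpha,\Gamma}[\cdot]=0$, then check that they satisfy the required decay conditions at $\pm\infty$ and have nonvanishing Wronskian; uniqueness of the pair with these properties then closes the argument. The computational heart is an algebraic identity: for any sufficiently smooth $g$, writing $\psi=g/F_\Gamma$ one should obtain
\[
{\cal L}_{\alpha,\Gamma}\!\left[\frac{g}{F_\Gamma}\right] = \frac{1}{F_\Gamma}\,{\cal L}_{\alpha+\Gamma}[g],
\]
provided $F_\Gamma$ itself satisfies ${\cal L}_\Gamma[F_\Gamma]=0$ (equivalently $\tfrac{\sigma^2}{2}F_\Gamma'' + \mu F_\Gamma' - \Gamma F_\Gamma = 0$, which holds by construction in Eq.\eqref{FDEF} since $\varphi_\Gamma,\eta_\Gamma$ are annihilated by ${\cal L}_\Gamma$). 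This is the classical ground-state / Doob $h$-transform identity; I would prove it by expanding $\partial_{xx}(g/F_\Gamma)$ and $\partial_x(g/F_\Gamma)$, substituting into the definition of ${\cal L}_{\alpha,\Gamma}$, and collecting terms — the mixed terms involving $F_\Gamma'/F_\Gamma$ cancel precisely because the extra drift in ${\cal L}_{\alpha,\Gamma}$ is $\sigma^2\partial_x\ln F_\Gamma$, and the remaining $F_\Gamma''$ and $F_\Gamma'$ terms combine into $-\Gamma g/F_\Gamma^2$ by the ODE for $F_\Gamma$, which is exactly what upgrades $\alpha$ to $\alpha+\Gamma$ on the right-hand side.

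Granting the identity, taking $g=\varphi_{\alpha+\Gamma}$ and $g=\eta_{\alpha+\Gamma}$ immediately gives ${\cal L}_{\alpha,\Gamma}[\varphi_{\alpha+\Gamma}/F_\Gamma]={\cal L}_{\alpha,\Gamma}[\eta_{\alpha+\Gamma}/F_\Gamma]=0$. For the boundary behaviour: since $F_\Gamma = p\varphi_\Gamma + q\eta_\Gamma$ is a positive combination of the exponentially growing/decaying homogeneous solutions of ${\cal L}_\Gamma$, it grows at most exponentially at each end and stays bounded away from $0$, while $\Gamma<\alpha+\Gamma$ forces $\varphi_{\alpha+\Gamma}$ to decay strictly faster at $+\infty$ than $F_\Gamma$ can grow (and symmetrically for $\eta_{\alpha+\Gamma}$ at $-\infty$); hence $\varphi_{\alpha+\Gamma}/F_\Gamma\to 0$ as $x\to+\infty$ and $\eta_{\alpha+\Gamma}/F_\Gamma\to 0$ as $x\to-\infty$, matching the normalization in the statement. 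Finally, a short Wronskian computation gives $W[\varphi_{\alpha+\Gamma}/F_\Gamma,\ \eta_{\alpha+\Gamma}/F_\Gamma](x) = W[\varphi_{\alpha+\Gamma},\eta_{\alpha+\Gamma}](x)/F_\Gamma(x)^2$, which is nonzero since the numerator is (the two homogeneous solutions of ${\cal L}_{\alpha+\Gamma}$ are independent) and $F_\Gamma>0$; this simultaneously confirms that the displayed pair is an admissible fundamental system, so by uniqueness of the normalized homogeneous solutions for ${\cal L}_{\alpha,\Gamma}$ it must coincide with $(\varphi_{\alpha,\Gamma},\eta_{\alpha,\Gamma})$.

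The main obstacle is purely bookkeeping in the conjugation identity — making sure every occurrence of $F_\Gamma'/F_\Gamma$ and $(F_\Gamma'/F_\Gamma)^2$ is tracked and that the $F_\Gamma''$ term is eliminated using the ODE rather than left dangling; once that cancellation is displayed cleanly the rest is routine. A secondary point worth stating carefully (rather than glossing) is \emph{why} strict decay survives division by $F_\Gamma$: this uses $\Gamma>0$ and the explicit exponential rates $\sqrt{2\Gamma}$ versus $\sqrt{2(\alpha+\Gamma)}$ of the homogeneous solutions in the constant-coefficient regime relevant to the application, or, in the general $\mu(x)$ case, a comparison of the decay/growth exponents coming from the large-$|x|$ asymptotics guaranteed by the hypotheses on $\mu$.
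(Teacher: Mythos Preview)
Your proposal is correct and takes essentially the same approach as the paper: both verify by direct computation that $F_\Gamma \cdot {\cal L}_{\alpha,\Gamma}[g/F_\Gamma] = {\cal L}_{\alpha+\Gamma}[g]$ (the paper carries this out for $g=\varphi_{\alpha+\Gamma}$ explicitly rather than stating the general conjugation identity). You are in fact more thorough than the paper, which stops after verifying the ODE and does not explicitly address the decay at $\pm\infty$ or the Wronskian nonvanishing --- points you correctly flag as needed to pin down the identification with $(\varphi_{\alpha,\Gamma},\eta_{\alpha,\Gamma})$.
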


\begin{proof}[Proof of Lemma \ref{LEM:FIRST-TECHNICAL-DOOBS}] Dropping the $x$-arguments and writing $\partial_x f \deq f^{'}$ and $\partial_{xx} f \deq f^{''},$ we have:

$$
\left\{
\begin{array}{l}
 \left[ \frac{\varphi_{\alpha + \Gamma}}{F_{\Gamma}} \right]^{'}=\frac{ \varphi^{'}_{\alpha + \Gamma}}{F_{\Gamma}} - \frac{\varphi_{\alpha + \Gamma} F_{\Gamma}'}{F^{2}_{\Gamma}},
\\\\
\left[ \frac{\varphi_{\alpha + \Gamma}}{F_{\Gamma}} \right]^{''}= 
\frac{\varphi_{\alpha+ \Gamma}^{''}}{F_{\Gamma}} - 2 \frac{\varphi_{\alpha+ \Gamma}^{'} F_{\Gamma}^{'}}{F_{\Gamma}^{2}}- \frac{\varphi_{\alpha+ \Gamma} F_{\Gamma}^{''}}{F_{\Gamma}^{2}} + 2 \frac{\varphi_{\alpha + \Gamma} \left[F_{\Gamma}^{'} \right]^{2} }{F^{3}_{\Gamma}}.
\end{array}
\right.
$$

\noindent Accordingly, we obtain:

\begin{align*}
F_{\Gamma}& {\cal L}_{\alpha, \Gamma} \left[ \frac{ \varphi_{\alpha + \Gamma} }{F_{\Gamma}} \right]
= 
\frac{\sigma^{2}}{2} \left[ \varphi_{\alpha+ \Gamma}^{''} -
2 \frac{\varphi_{\alpha+ \Gamma}^{'} F_{\Gamma}^{'}}{F_{\Gamma}}- \frac{\varphi_{\alpha+ \Gamma} F_{\Gamma}^{''}}{F_{\Gamma} } 
2 \frac{\varphi_{\alpha + \Gamma} \left[F_{\Gamma}^{'} \right]^{2} }{F^{2}_{\Gamma}} \right]  \\
&+ \mu \left[ \varphi^{'}_{\alpha + \Gamma}- \frac{\varphi_{\alpha + \Gamma} F_{\Gamma}'}{F_{\Gamma}} \right] +
\frac{ \sigma^{2}F_{\Gamma}^{'} \varphi^{'}_{\alpha + \Gamma} }{F_{\Gamma} }
 - \sigma^{2}\left[ \frac{\varphi_{\alpha + \Gamma} \left(F_{\Gamma}^{'} \right)^{2}}{F_{\Gamma}^{3}} \right]
 - \alpha \varphi_{\alpha + \Gamma}
\\
&= \sigma^{2} \varphi_{\alpha+ \Gamma}^{''} - \frac{\sigma^{2}}{2} \left[ \frac{ F_{\Gamma}^{''} \varphi_{\alpha+ \Gamma}}{F_{\Gamma}}\right] + \mu \varphi^{'}_{\alpha + \Gamma} 
- \mu \frac{F_{\Gamma}^{'}\varphi_{\alpha + \Gamma}}{F_{\Gamma}} 
- \alpha \varphi_{\alpha + \Gamma}  \\
&= \sigma^{2} \varphi_{\alpha+ \Gamma}^{''}  + \mu \varphi^{'}_{\alpha + \Gamma} - (\alpha + \Gamma) \varphi_{\alpha+ \Gamma}= {\cal L}_{\alpha + \Gamma} [\varphi_{\alpha+ \Gamma}]=0 .
\end{align*}

\noindent Since $F_{\Gamma} >0$ we deduce that ${\cal L}_{\alpha, \Gamma} \left[ \frac{ \varphi_{\alpha + \Gamma} }{F_{\Gamma}}\right] = 0 $ and we can conclude that $\varphi_{\alpha, \Gamma} = \frac{\varphi_{\alpha + \Gamma} }{F_{\Gamma}}$. A similar computation holds for $\eta_{\alpha+ \Gamma}(x)$. 
\end{proof}


\begin{lem}\label{LEM:SECOND-TECHNICAL-DOOBS}
Under the hypotheses of Lemma \ref{LEM:FIRST-TECHNICAL-DOOBS}, the following identities hold for every $x \in \mathbb{R},$ 
\begin{equation*}
\begin{array}{l}
i) \,\quad W[\varphi_{\alpha, \Gamma}, 1] (x) = \frac{W[ \varphi_{\alpha + \Gamma} , F_{\Gamma} ](x)}{F_{\Gamma}^{2}(x)} , \\\\ 

ii) \quad W[\varphi_{\alpha, \Gamma}, \eta_{\alpha, \Gamma}] (x) = \frac{W[ \varphi_{\alpha + \Gamma}, \eta_{\alpha + \Gamma} ] (x) }{F_{\Gamma}^{2}(x)} .
\end{array}
\end{equation*}

\end{lem}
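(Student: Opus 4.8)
The plan is to compute both Wronskians directly using the explicit formulas for $\varphi_{\alpha,\Gamma}$ and $\eta_{\alpha,\Gamma}$ furnished by Lemma \ref{LEM:FIRST-TECHNICAL-DOOBS}, namely $\varphi_{\alpha,\Gamma} = \varphi_{\alpha+\Gamma}/F_\Gamma$ and $\eta_{\alpha,\Gamma} = \eta_{\alpha+\Gamma}/F_\Gamma$. The key observation is that the Wronskian behaves nicely under division by a common factor. For part $i)$, I would write $W[\varphi_{\alpha,\Gamma},1](x) = -\varphi_{\alpha,\Gamma}'(x)$ directly from the definition of the Wronskian determinant (since the second function is constant, its derivative vanishes). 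Using the quotient rule $\varphi_{\alpha,\Gamma}' = (\varphi_{\alpha+\Gamma}' F_\Gamma - \varphi_{\alpha+\Gamma} F_\Gamma')/F_\Gamma^2$, one recognizes the numerator as precisely $W[\varphi_{\alpha+\Gamma}, F_\Gamma](x)$ up to sign; after tracking the sign convention used in the paper's definition of $W[f,g]$, this yields the claimed identity $W[\varphi_{\alpha,\Gamma},1](x) = W[\varphi_{\alpha+\Gamma},F_\Gamma](x)/F_\Gamma^2(x)$.

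For part $ii)$, I would use the general fact that for any functions $f, g$ and any nowhere-zero $\phi$, one has $W[f/\phi, g/\phi] = W[f,g]/\phi^2$. This follows by a short computation: $(f/\phi)(g/\phi)' - (g/\phi)(f/\phi)' = \frac{1}{\phi^2}(f g' - g f') $ after the $\phi'$-terms cancel. Applying this with $f = \varphi_{\alpha+\Gamma}$, $g = \eta_{\alpha+\Gamma}$, $\phi = F_\Gamma$ gives exactly $W[\varphi_{\alpha,\Gamma}, \eta_{\alpha,\Gamma}](x) = W[\varphi_{\alpha+\Gamma}, \eta_{\alpha+\Gamma}](x)/F_\Gamma^2(x)$. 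One should also note in passing that this shows $W[\varphi_{\alpha,\Gamma},\eta_{\alpha,\Gamma}] \neq 0$ on $\mathbb{R}$, which is consistent with the standing hypothesis (indeed it follows from it, since $F_\Gamma > 0$).

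There is no serious obstacle here — the lemma is a pure bookkeeping exercise in the algebra of Wronskians. The only point requiring minor care is the sign convention: the paper defines $W[f,g] = fg' - gf'$, so one must make sure the factor $F_\Gamma'$ enters with the right sign in part $i)$, and that $W[\varphi_{\alpha,\Gamma}, 1]$ is identified with $-\partial_x \varphi_{\alpha,\Gamma}$ rather than $+\partial_x\varphi_{\alpha,\Gamma}$. Both identities are then immediate consequences of Lemma \ref{LEM:FIRST-TECHNICAL-DOOBS} together with the quotient rule, and the lemma feeds directly into the proof of Theorem \ref{THM:GITTINS-DOOBS} by allowing the Wronskian ratios in the Gittins' index formula of Corollary \ref{COR:ALTERNATIVE-GTTINS} (applied to the generator $\mathcal{L}_{\alpha,\Gamma}$) to be rewritten in terms of quantities associated with the shifted generator $\mathcal{L}_{\alpha+\Gamma}$ and the function $F_\Gamma$.
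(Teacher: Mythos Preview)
Your proposal is correct and follows exactly the approach the paper takes: the paper's proof consists of the single line ``Immediate by using Lemma \ref{LEM:FIRST-TECHNICAL-DOOBS} and direct calculations,'' and you have simply spelled out those direct calculations (the quotient rule for part $i)$ and the identity $W[f/\phi,g/\phi]=W[f,g]/\phi^2$ for part $ii)$). There is nothing to add.
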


\begin{proof}[Proof of Lemma \ref{LEM:SECOND-TECHNICAL-DOOBS}] Immediate by using Lemma \ref{LEM:FIRST-TECHNICAL-DOOBS} and direct calculations.
\end{proof}

\begin{proof}[Proof of Theorem \ref{THM:GITTINS-DOOBS}]
Invoking Corollary \ref{COR:ALTERNATIVE-GTTINS}, the Gittins' index $M_{\alpha, \Gamma}(x)$ for the diffusive evolution Eq.(\ref{MODIFFU}) reads immediately as:

\begin{equation}
\label{GITON22}
M_{\alpha, \Gamma}(x) = \frac{W[\varphi_{\alpha, \Gamma}, \eta_{\alpha, \Gamma}](x)}{W[ \varphi_{\alpha, \Gamma},1](x)}
 \int_{x}^{\infty} \frac{2 h(s) \varphi_{\alpha, \Gamma}(s)\dd s }{\sigma^{2} W[\varphi_{\alpha, \Gamma}, \eta_{\alpha, \Gamma}](s)} .
\end{equation}

\noindent Using Lemmas \ref{LEM:FIRST-TECHNICAL-DOOBS} and \ref{LEM:SECOND-TECHNICAL-DOOBS}, we then find that:

\begin{equation}
\label{INTERRES}
\left\{
\begin{array}{l}
\frac{W[\varphi_{\alpha, \Gamma}, \eta_{\alpha, \Gamma}](x)}{W[ \varphi_{\alpha, \Gamma},1](x)} = \frac{W[\varphi_{\alpha+ \Gamma}, \eta_{\alpha+ \Gamma}] (x)}{W[\varphi_{\alpha+ \Gamma}, F_{\Gamma}] (x)} ,
\\\\
\frac{2 h(s) \varphi_{\alpha, \Gamma}(s) }{\sigma^{2} W[\varphi_{\alpha, \Gamma} , \eta_{\alpha, \Gamma} ](s) } = 

\frac{2 h(s) \varphi_{\alpha + \Gamma} (s) F_{\Gamma}(s)}{\sigma^{2}W[\varphi_{\alpha + \Gamma}, \eta_{\alpha + \Gamma}] (s) }.
\end{array}
\right.
\end{equation}

\noindent Plugging Eq.(\ref{INTERRES}) into Eq.(\ref{GITON22}), the assertion follows immediately.
\end{proof}


\noindent Let us work out some consequences of Theorem \ref{THM:GITTINS-DOOBS}.
As a first illustration, we will derive the Gittins' index of a Brownian motion with a constant drift. This result can be found in \cite[Eq.(3.22)]{K} and our corollary can be seen as a sanity check.

\begin{cor}\label{CG4} 
\noindent Let $( h, \alpha, k, K)$ an admissible reward structure, $\mu \in \mathbb{R}$ and $ \sigma \in \mathbb{R}^{+}$ two constants. For the drifted Brownian motion diffusion process:

\begin{equation}
\label{GITTINS-BM-2}
\dd X_t = \mu \dd t + \sigma \dd W_t, \qquad X_0=0,
\end{equation}

\noindent the corresponding Gittins' index $M_{\alpha} (x) $ reads:

\begin{equation*}
M_{\alpha} (x) = \frac{1}{\alpha} \int_{0}^{\infty} h\left[x + \frac{z}{\beta} \right] \ee^{-z}dz, \qquad \quad \beta \deq \frac{\sqrt{\mu^{2} + 2 \alpha \sigma^{2}} - \mu }{\sigma^{2}}.
\end{equation*}

\end{cor}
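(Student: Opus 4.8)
The plan is to apply Theorem \ref{THM:GITTINS-DOOBS} with a judicious choice of base diffusion and function $F_\Gamma$ so that the drifted Brownian motion of Eq.\eqref{GITTINS-BM-2} appears as the $F$-transformed process. Concretely, I would take $\mu(x)\equiv 0$ in Eq.\eqref{GENERIC-SDE}, so that the base arm is a driftless Brownian motion with diffusion coefficient $\sigma$, and I would look for a constant $\Gamma>0$ and a function $F_\Gamma(x)$ of the form $F_\Gamma(x)=\ee^{\lambda x}$ such that $m(x)=\sigma^2\partial_x\{\ln F_\Gamma(x)\}=\sigma^2\lambda$ equals the prescribed drift $\mu$. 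This forces $\lambda=\mu/\sigma^2$. For $F_\Gamma=\ee^{\lambda x}$ to be a solution of the homogeneous equation $\frac{\sigma^2}{2}F'' -\Gamma F=0$ (the $\mu(x)=0$ instance of the ODE in item a)), one needs $\frac{\sigma^2}{2}\lambda^2=\Gamma$, i.e.\ $\Gamma=\mu^2/(2\sigma^2)$, which is indeed positive as required. So the recipe is: base process = driftless BM, $\Gamma=\mu^2/(2\sigma^2)$, $F_\Gamma(x)=\ee^{\mu x/\sigma^2}$.

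Next I would assemble the ingredients appearing in Eq.\eqref{GITTINS-WRONSK-DOOBS}. The homogeneous solutions of $\mathcal L_{\alpha+\Gamma}$ for the driftless generator are $\varphi_{\alpha+\Gamma}(x)=\ee^{-\gamma x}$ and $\eta_{\alpha+\Gamma}(x)=\ee^{\gamma x}$ with $\gamma=\sqrt{2(\alpha+\Gamma)}/\sigma$, mirroring the Brownian motion example already worked out in the excerpt. A short computation gives $W[\varphi_{\alpha+\Gamma},\eta_{\alpha+\Gamma}](x)=2\gamma$ and $W[\varphi_{\alpha+\Gamma},F_\Gamma](x)=(\lambda+\gamma)\ee^{(\lambda-\gamma)x}$ with $\lambda=\mu/\sigma^2$. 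Substituting into Eq.\eqref{GITTINS-WRONSK-DOOBS} and simplifying the exponential prefactors, the integral collapses to
\begin{equation*}
M_\alpha(x)=\frac{2\gamma}{\lambda+\gamma}\int_x^\infty \frac{2h(s)\,\ee^{-\gamma s}\ee^{\lambda s}}{\sigma^2\cdot 2\gamma}\,\ee^{-(\lambda-\gamma)x}\,\dd s
=\frac{2}{\sigma^2(\lambda+\gamma)}\int_x^\infty h(s)\,\ee^{-(\gamma-\lambda)(s-x)}\,\dd s.
\end{equation*}
Setting $\beta\deq\gamma-\lambda=\bigl(\sqrt{2(\alpha+\Gamma)}-\mu/\sigma\bigr)/\sigma$ and using $\Gamma=\mu^2/(2\sigma^2)$ one checks $2(\alpha+\Gamma)=2\alpha+\mu^2/\sigma^2$, whence $\beta=(\sqrt{\mu^2+2\alpha\sigma^2}-\mu)/\sigma^2$, matching the claimed expression. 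Finally, the change of variable $z=\beta(s-x)$ turns the integral into $\frac{1}{\beta}\int_0^\infty h[x+z/\beta]\ee^{-z}\dd z$, and one verifies that the constant $\frac{2}{\sigma^2(\lambda+\gamma)}\cdot\frac1\beta$ equals $1/\alpha$ — indeed $\frac{2}{\sigma^2(\gamma+\lambda)(\gamma-\lambda)}=\frac{2}{\sigma^2(\gamma^2-\lambda^2)}=\frac{2}{\sigma^2\cdot 2\alpha/\sigma^2\cdot\ldots}$; here $\gamma^2-\lambda^2=2(\alpha+\Gamma)/\sigma^2-\mu^2/\sigma^4=2\alpha/\sigma^2$, so the constant is exactly $1/\alpha$. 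This yields the stated formula.

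The only genuine check, and hence the place I would be most careful, is the verification that the hypotheses of Theorem \ref{THM:GITTINS-DOOBS} hold for this choice: one must confirm $F_\Gamma=\ee^{\mu x/\sigma^2}>0$ (trivial), that it genuinely solves the relevant ODE with the chosen $\Gamma$ (the quadratic relation above), and that the drift $m(x)=\mu$ satisfies $0<\alpha_0<\alpha-m'(x)<b$ — which holds with room to spare since $m'(x)=0$, so $\alpha-m'(x)=\alpha$ is a fixed positive constant. Everything else is the routine exponential bookkeeping sketched above; no real obstacle remains once the parameter identification $\Gamma=\mu^2/(2\sigma^2)$, $\lambda=\mu/\sigma^2$ is made.
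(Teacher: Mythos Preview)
Your proposal is correct and follows essentially the same route as the paper: start from driftless Brownian motion, choose $F_\Gamma(x)=\ee^{\lambda x}$ with $\lambda=\mu/\sigma^2$ and $\Gamma=\mu^2/(2\sigma^2)$, compute the two Wronskians, plug into Eq.~\eqref{GITTINS-WRONSK-DOOBS}, and finish with the change of variable $z=\beta(s-x)$ together with the identity $\gamma^2-\lambda^2=2\alpha/\sigma^2$. The paper does the identical computation with the notation $A=\sqrt{2\Gamma}/\sigma$ (your $\lambda$) and $B=\sqrt{2(\alpha+\Gamma)}/\sigma$ (your $\gamma$), and checks the Karatzas condition in the same trivial way ($m'\equiv 0$).
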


\begin{proof} 
\noindent We start with the pure Brownian motion which is defined by the SDE $\dd X_t = \sigma \dd W_t$. The corresponding infinitesimal generator is:

\begin{equation} \label{VS1}
\left\{
\begin{array}{l}
{\cal L}_{\alpha} [ \cdot]= \frac{\sigma^{2}}{2} \partial_{xx} [\cdot] - \alpha [\cdot], \\\\
\varphi_{\alpha} (x) = \ee^{- \frac{\sqrt{2 \alpha}}{\sigma} x} \quad \text{and} \quad \eta_{\alpha} (x) = \ee^{+ \frac{\sqrt{2 \alpha}}{\sigma} x}.
\end{array}
\right.
\end{equation}

\noindent The functions $\varphi_{\alpha}  $ and $\eta_{\alpha}  $ satisfy the conditions of Eq.\eqref{HOMOG} and form a basis of the vector space of solutions of Eq.(\ref{VS1}). In Eq.(\ref{FDEF}), we make the specific choice $ p=0$ and $ q=1$ leading to:
 
\begin{equation*}
F_{\Gamma}(x) = \ee^{+ \frac{\sqrt{2 \Gamma}}{\sigma} x} \quad \Leftrightarrow \quad \sigma^{2}\partial_x \ln [ F_{\Gamma}(x)] = \sigma \sqrt{2 \Gamma}.
\end{equation*}
 
\noindent Writing $ A \deq  \frac{\sqrt{2 \Gamma}} {\sigma}$ and $B \deq \frac{ \sqrt{2 (\alpha + \Gamma)}}{\sigma}$, a direct computation shows that:

\begin{equation}
\label{ABDEF}
\left\{
\begin{array}{l}
W[\varphi_{\alpha+ \Gamma} , F_{\Gamma}  ](x)= 
 (B+A) \ee^{(B-A)x },
\\\\
W[\varphi_{\alpha+ \Gamma}, \eta_{\alpha+ \Gamma} ](x) = \frac{2 \sqrt{2 (\alpha + \Gamma)}}{\sigma} = 2B,
\end{array}
\right.
\end{equation}
for every $x \in \mathbb{R}.$

\noindent Fixing  $\Gamma= \frac{\mu^{2} }{2 \sigma^{2}}$, we see that $\sigma^{2}\partial_x \ln [ F_{\Gamma}(x)] = \mu $ and therefore Eq.\eqref{GITTINS-BM-2} can be modified in the format of Eq.\eqref{MODIFFU} with $m(x)=\sigma^{2}\partial_x \ln [ F_{\Gamma}(x)=\mu.$ 
\noindent Finally, observe that since $m$ is constant, condition Eq.\eqref{KARA-CONDITION} is trivially satisfied and we can apply Theorem \ref{THM:GITTINS-DOOBS}. The Gittins' index of $X_t$ can be read from  Eq.(\ref{GITTINS-WRONSK-DOOBS}):

\begin{equation*}
M_{\alpha, \Gamma} (x) 
= \frac{\ee^{(B-A) x}}{B+A}\int_{x}^{\infty} \frac{2 h(s)} {\sigma^{2}} \ee^{(A-B) s}\dd s 
= \frac{1}{\alpha} \int_{0}^{\infty} h\left[x + \frac{\sigma z}{A-B} \right] \ee^{-z} \dd z,
\end{equation*}
where we used the change of variable $s = x + \frac{\sigma z}{A-B}$ in the second equality. By our choice of $\Gamma$, we see that $\frac{A-B}{\sigma}=\beta.$ This concludes the proof.  
\end{proof}

\noindent We now compute the Gittins' index of a class of diffusion that encompasses the DMPS diffusion defined in Eq.\eqref{SDE-DMPS}.
\begin{cor}\label{CG5}
 \noindent Let $( h, \alpha, k, K)$ an admissible reward structure, $\Gamma\in \mathbb{R}^{+}$ and $\sigma \in \mathbb{R}^{+}$ two constants such that $\Gamma<\alpha/2.$ Let us consider the non-Gaussian diffusion process:
  
\begin{equation*}
  \dd X_t = \sigma^{2} A \tanh( AX_t) \dd t + \sigma \dd W_t, \qquad X_0=0.
\end{equation*}

\noindent The corresponding Gittins' index $M_{\alpha, \Gamma} (x) $ reads:

\begin{equation}
\label{GITTINS-DMPS}
\left\{
\begin{array}{l}
M_{\alpha, \Gamma} (x) =\rho_- (x) S_{-}(x) +\rho_+ (x) S_{+}(x),\\\\

\rho_\pm (x) \deq \frac{2 }{2 \alpha + \sigma^{2} \left[ B\pm A \right]^{2} \ee^{\pm 2A x} } \quad {\rm and} \quad S_{\pm }(x) \deq \int_{0}^{\infty} h\left[x + \frac{s}{ B \pm A } \right] \ee^{-s}\dd s,
\end{array}
\right.
\end{equation}
\noindent where both constants $A$ and $B$ are defined in Eq.(\ref{ABDEF}).
\end{cor}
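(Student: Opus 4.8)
The plan is to recognize the diffusion in the statement as a special case of Theorem \ref{THM:GITTINS-DOOBS} and then to reduce formula \eqref{GITTINS-WRONSK-DOOBS} to the claimed form by one elementary substitution. Recall $A = \sqrt{2\Gamma}/\sigma$ and $B = \sqrt{2(\alpha+\Gamma)}/\sigma$. First I would take, in the base equation \eqref{GENERIC-SDE}, the drift $\mu \equiv 0$ and choose $F_{\Gamma}(x) = \cosh(Ax)$. Because $\frac{\sigma^2 A^2}{2} = \Gamma$, this $F_{\Gamma}$ is strictly positive, solves $\frac{\sigma^2}{2}F_{\Gamma}'' - \Gamma F_{\Gamma} = 0$, and equals $\tfrac12\varphi_{\Gamma} + \tfrac12\eta_{\Gamma}$ with $\varphi_{\Gamma}(x) = \ee^{-Ax}$ and $\eta_{\Gamma}(x) = \ee^{Ax}$ the homogeneous solutions attached to $\mathcal{L}_{\Gamma}$, so it has the form \eqref{FDEF} with $p = q = 1/2$. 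Since $\sigma^2\partial_x\ln F_{\Gamma}(x) = \sigma^2 A\tanh(Ax)$, the drift $m(x) = \mu(x) + \sigma^2\partial_x\ln F_{\Gamma}(x)$ in \eqref{MODIFFU} is exactly $\sigma^2 A\tanh(Ax)$, i.e.\ the diffusion of the statement.

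Next I would verify the admissibility bound required by Theorem \ref{THM:GITTINS-DOOBS}. One computes $m'(x) = \sigma^2 A^2\operatorname{sech}^2(Ax) = 2\Gamma\operatorname{sech}^2(Ax)$, hence $m'(x) \in (0, 2\Gamma]$ and $\alpha - m'(x) \in [\alpha - 2\Gamma, \alpha)$. The hypothesis $\Gamma < \alpha/2$ guarantees $\alpha - 2\Gamma > 0$, so the choices $\alpha_0 = (\alpha - 2\Gamma)/2$ and $b = \alpha$ make $0 < \alpha_0 < \alpha - m'(x) < b$ hold; this is the only place the assumption $\Gamma < \alpha/2$ enters. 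Theorem \ref{THM:GITTINS-DOOBS} then applies with $\varphi_{\alpha+\Gamma}(x) = \ee^{-Bx}$ and $\eta_{\alpha+\Gamma}(x) = \ee^{Bx}$, which satisfy \eqref{HOMOG} for $\mathcal{L}_{\alpha+\Gamma}$ because $\frac{\sigma^2 B^2}{2} = \alpha + \Gamma$. The two Wronskians needed are immediate: $W[\varphi_{\alpha+\Gamma}, \eta_{\alpha+\Gamma}](x) = 2B$ and $W[\varphi_{\alpha+\Gamma}, F_{\Gamma}](x) = \ee^{-Bx}\bigl(A\sinh(Ax) + B\cosh(Ax)\bigr)$.

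Substituting into \eqref{GITTINS-WRONSK-DOOBS}, the constant factor $W[\varphi_{\alpha+\Gamma}, \eta_{\alpha+\Gamma}]$ cancels and one is left with $M_{\alpha,\Gamma}(x) = \bigl(\sigma^2 W[\varphi_{\alpha+\Gamma}, F_{\Gamma}](x)\bigr)^{-1}\int_x^\infty 2h(s)\ee^{-Bs}\cosh(As)\,\dd s$. Using $2\ee^{-Bs}\cosh(As) = \ee^{-(B-A)s} + \ee^{-(B+A)s}$ splits the integral into $\int_x^\infty h(s)\ee^{-(B-A)s}\,\dd s$ and $\int_x^\infty h(s)\ee^{-(B+A)s}\,\dd s$ (both converge since $B > A$, as $\sigma^2(B^2 - A^2) = 2\alpha > 0$), and the substitution $s = x + z/(B\pm A)$ turns each into $\frac{\ee^{-(B\pm A)x}}{B\pm A}S_\pm(x)$. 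It remains to check that the resulting prefactor $\frac{\ee^{-(B\pm A)x}}{\sigma^2(B\pm A)\,W[\varphi_{\alpha+\Gamma}, F_{\Gamma}](x)}$ equals $\rho_\pm(x)$: writing $W[\varphi_{\alpha+\Gamma}, F_{\Gamma}](x) = \frac{A+B}{2}\ee^{(A-B)x} + \frac{B-A}{2}\ee^{-(A+B)x}$ and clearing the exponentials, each prefactor collapses to $\frac{2}{2\alpha + \sigma^2(B\pm A)^2 \ee^{\pm 2Ax}}$, precisely because $\sigma^2(B^2 - A^2) = \sigma^2 B^2 - \sigma^2 A^2 = 2(\alpha+\Gamma) - 2\Gamma = 2\alpha$. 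Adding the two terms gives $M_{\alpha,\Gamma}(x) = \rho_-(x)S_-(x) + \rho_+(x)S_+(x)$.

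The computation is almost entirely mechanical; the one step needing care is this last one, where the single Wronskian prefactor delivered by \eqref{GITTINS-WRONSK-DOOBS} must be shown, after the change of variables, to redistribute correctly into the two genuinely distinct coefficients $\rho_-(x)$ and $\rho_+(x)$. This hinges on the identity $2\alpha = \sigma^2(B^2 - A^2)$ together with careful bookkeeping of the exponential factors $\ee^{\pm 2Ax}$, but there is no conceptual obstacle.
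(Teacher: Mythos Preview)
Your proof is correct and follows essentially the same route as the paper: choose $\mu\equiv 0$ and $F_{\Gamma}(x)=\cosh(Ax)$ (so $p=q=\tfrac12$ in \eqref{FDEF}), verify the Karatzas condition using $\Gamma<\alpha/2$, apply Theorem~\ref{THM:GITTINS-DOOBS}, and then reduce \eqref{GITTINS-WRONSK-DOOBS} to the stated form via the splitting $2\ee^{-Bs}\cosh(As)=\ee^{-(B-A)s}+\ee^{-(B+A)s}$ together with the substitution $s=x+z/(B\pm A)$ and the identity $\sigma^{2}(B^{2}-A^{2})=2\alpha$. Your check of the admissibility bound via $m'(x)=2\Gamma\operatorname{sech}^{2}(Ax)\in(0,2\Gamma]$ is in fact cleaner than the paper's, which rewrites $m'$ through $F_{\Gamma}''/F_{\Gamma}$ and the relation $\mathcal{L}_{\Gamma}F_{\Gamma}=0$, but the content is the same.
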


 
\begin{proof}
Start with the pure Brownian motion and hence Eq.(\ref{VS1}). Make the specific choice $ p= q =\frac{1}{2}$ in  Eq.(\ref{FDEF}) to obtain :

\begin{align*}
F_{\Gamma} (x)= \cosh\left[\frac{\sqrt{2\Gamma}}{\sigma}x \right]= \cosh(Ax) \quad \Rightarrow \quad \sigma^{2} \partial_x \ln \left[ F_{\Gamma}(x) \right] = \sigma^{2} A \tanh \left( A x\right) .
\end{align*}
 
\noindent By direct calculations, we obtain:

$$
\left\{
\begin{array}{l}
\varphi_{\alpha + \Gamma} = \ee^{- Bx} \qquad \text{and} \qquad \eta_{\alpha + \Gamma} = \ee^{ Bx}, \\\\ 
W[\varphi_{\alpha + \Gamma}, \eta_{\alpha+ \Gamma}](x) = 2B,
\\\\
W[\varphi_{\alpha + \Gamma}(x), \cosh(Ax) ]= \frac{1}{2} \left[ (B+A)\ee^{(A-B)x} +(B-A) \ee^{- (B+A) x}\right].
\end{array}
\right.
$$

\noindent Before applying Theorem \ref{THM:GITTINS-DOOBS}, we still need to check Eq.\eqref{KARA-CONDITION}. Without loss of generality let us set $\sigma=1$ and write $m(x) = \frac{\dd}{\dd x}\ln F_{\Gamma}(x)$.  Then we find, using the fact that $\mathcal{L}_{\Gamma} F_{\Gamma}=0$, 
\begin{align}\label{equ:KARACONDI_DMPS}
  \begin{split}
  \alpha - \mu'(x) &=
  \alpha - \frac{F_{\Gamma}''(x)}{F_{\Gamma}(x)} - \Bigl(\frac{F_{\Gamma}'(x)}{F_{\Gamma}(x)}\Bigr)^2\\
  &=\alpha -  2\Gamma + 2 \frac{F_{\Gamma}'(x)}{F_{\Gamma}(x)}\\
  &= \alpha - 2\Gamma - 2\Gamma \tanh^2 \Bigl(\sqrt{2\Gamma} x\Bigr) \geq \alpha-2\Gamma.
  \end{split}
\end{align} 
The right-hand side is bounded away from $0$ if $\Gamma<\alpha/2.$  

\noindent We can therefore use Theorem \ref{THM:GITTINS-DOOBS}. Using Eq.(\ref{GITTINS-WRONSK-DOOBS}), we have:

\begin{align*} 
  M_{\alpha, \Gamma}(x) & = \frac{1}{W[\varphi_{\alpha + \Gamma}(x), \cosh(Ax) ]} \int_{x}^{\infty} \frac{2}{\sigma^{2}} h(s) \ee^{- Bs} \cosh(As)\dd s \\
  &=
 \frac{2 \ee^{Bx} }{ \left[ (B+A)\ee^{Ax} +(B-A) \ee^{- A x}\right]} \int_{x}^{\infty} \frac{2}{\sigma^{2}} h(s) \ee^{- Bs} \cosh(As)\dd s\\
 &= \frac{ \int_{x}^{\infty} \frac{2}{\sigma^{2}} h(s) \ee^{- (B-A) (s-x) }\dd s }{(B+A) + (B-A) \ee^{- 2Ax}}
+ \frac{ \int_{x}^{\infty} \frac{2}{\sigma^{2}} h(s)\ee^{- (B+A) (s-x) }\dd s} {(B+A) \ee^{+2Ax}+ (B-A)} \\
&=\frac{\int_{0}^{\infty}h\left[ x + \frac{z}{(B-A)}\right] \ee^{- z} \dd z}{2 \alpha + \sigma^{2} (A-B)^{2}\ee^{- 2 Ax} } +
\frac{\int_{0}^{\infty} h\left[ x + \frac{z}{(B+A)}\right] \ee^{- z} \dd z}{2 \alpha + \sigma^{2} (B+A)^{2}\ee^{ 2 Ax} }.
\end{align*}
\noindent Then a direct identification with Eq.(\ref{GITTINS-DMPS}) ends the proof.
\end{proof}

\section{Optimal allocation for the two-armed bandit} \label{SEC55}

\noindent We now discuss the optimal allocation of the two-armed non-Gaussian diffusive bandit with arms' dynamics given by  Eqs.(\ref{SDE-A}) and (\ref{SDE-DMPS}). 

\begin{thm}\label{OPTALOC}
\noindent Let $\Gamma, \sigma_1, \sigma_2 >0 $ be constant parameters, $( h, \alpha, k, K)$ be an admissible reward structure, and the two-armed diffusive evolutions:
\begin{equation*}
\left\{
\begin{array}{l}
\dd X^{(1)}_t = \sigma_1 \dd W_t^{(1)}, \\\\ 
\dd X^{(2)}_t = \sigma_2 \sqrt{2 \Gamma} \tanh\left[\frac{\sqrt{2 \Gamma}}{\sigma_2} X^{(2)}_t \right] \dd t +\sigma_2 \dd W_{2}^{(t)},
\end{array}
\right.
\end{equation*}

\noindent with corresponding Gittins' indices $M^{(1)} $ and $M^{(2)} $ given by Eq.(\ref{PUREBrownian motion}) respectively Eq.(\ref{GITTINS-DMPS}). Assume that the control parameters satisfy the relation:

\begin{equation}
\label{DIAGRA}
\sqrt{1 + \frac{\Gamma}{\alpha} }- \sqrt{ \frac{\Gamma}{\alpha} } < \frac{\sigma_2}{\sigma_1} 
< \sqrt{1 + \frac{\Gamma}{\alpha} } + \sqrt{ \frac{\Gamma}{\alpha} }
\end{equation}

\noindent Then there exists two thresholds $ x_{\pm} \deq x_{\pm} (\sigma_1, \sigma_2, \Gamma) \in \mathbb{R}$ such that:

$$
\left\{
\begin{array}{l}
 x > x_{+}(\sigma_1, \sigma_2, \Gamma) \quad \Rightarrow \quad M^{(1) }(x) < M^{(2)}(x), \\\\ 
 x <  x_{-}(\sigma_1, \sigma_2, \Gamma)  \quad \Rightarrow \quad M^{(1) }(x) > M^{(2)}(x).
 \end{array}
 \right.
$$
\end{thm}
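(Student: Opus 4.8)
\emph{Reformulation.} The plan is first to put both indices in a common ``Laplace'' form. Substituting $z=\beta v$ in \eqref{PUREBrownian motion} and \eqref{GITTINS-DMPS} and setting $\beta_1\deq\sqrt{2\alpha}/\sigma_1$, $\beta_\pm\deq B\pm A$ with $A,B$ as in \eqref{ABDEF} for the DMPS arm, one gets
\[
M^{(1)}(x)=\frac{\beta_1}{\alpha}\int_0^\infty h(x+v)\ee^{-\beta_1 v}\,\dd v,\qquad
M^{(2)}(x)=w_-(x)\frac{\beta_-}{\alpha}\int_0^\infty h(x+v)\ee^{-\beta_- v}\,\dd v+w_+(x)\frac{\beta_+}{\alpha}\int_0^\infty h(x+v)\ee^{-\beta_+ v}\,\dd v,
\]
where, using $\sigma_2^2\beta_\pm^2=2\alpha c_\pm^2$ with $c_\pm\deq\sqrt{1+\Gamma/\alpha}\pm\sqrt{\Gamma/\alpha}$ (so $c_-c_+=1$), one has $w_\pm(x)=(1+c_\pm^2\ee^{\pm 2Ax})^{-1}\ge 0$ and $w_-(x)+w_+(x)=1$; also \eqref{DIAGRA} is exactly $\beta_-<\beta_1<\beta_+$. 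Since each kernel $\tfrac{\beta}{\alpha}\ee^{-\beta v}$ integrates to $1/\alpha$ on $(0,\infty)$, the signed kernel in $\Delta\deq M^{(2)}-M^{(1)}$ has zero total mass, and an integration by parts gives
\[
\Delta(x)=\int_0^\infty h'(x+v)\,\Psi_x(v)\,\dd v,\qquad
\Psi_x(v)\deq\frac{1}{\alpha}\bigl(w_-(x)\ee^{-\beta_- v}+w_+(x)\ee^{-\beta_+ v}-\ee^{-\beta_1 v}\bigr),
\]
with $h'\ge 0$, $\int_a^b h'>0$ for $a<b$. So everything reduces to the sign structure of $v\mapsto\Psi_x(v)$ weighted by the positive profile $h'(x+\cdot)$.

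\emph{Large $x$.} Here convexity does it. The barycentre $\bar\beta(x)\deq w_-(x)\beta_-+w_+(x)\beta_+=\beta_+-w_-(x)(\beta_+-\beta_-)$ decreases strictly from $\beta_+$ to $\beta_-$, so there is a unique $x_+$ with $\bar\beta(x_+)=\beta_1$, and $\bar\beta(x)\le\beta_1$ for $x\ge x_+$. For such $x$, strict convexity of $t\mapsto\ee^{-tv}$ ($v>0$) gives $w_-\ee^{-\beta_- v}+w_+\ee^{-\beta_+ v}>\ee^{-\bar\beta(x)v}\ge\ee^{-\beta_1 v}$, i.e.\ $\Psi_x(v)>0$ for all $v>0$, whence $\Delta(x)>0$. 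This is the first implication, with threshold $x_+$.

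\emph{Small $x$.} This is the delicate half, since $\Psi_x$ genuinely changes sign. Writing $\alpha\Psi_x(v)=-\ee^{-\beta_1 v}g_x(v)$ with $g_x(v)\deq 1-w_-(x)\ee^{(\beta_1-\beta_-)v}-w_+(x)\ee^{-(\beta_+-\beta_1)v}$, one checks (the derivative $g_x'$ is monotone, $g_x'(0)=\bar\beta(x)-\beta_1>0$ for $x<x_+$, and $g_x(v)\to-\infty$) that $g_x>0$ exactly on a single interval $(0,v_c(x))$, hence $\Psi_x<0$ on $(0,v_c(x))$ and $\Psi_x\ge 0$ for $v\ge v_c(x)$. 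Moreover $v_c(x)\to+\infty$ with $v_c(x)\sim\tfrac{2A}{\beta_1-\beta_-}\,|x|$, and the decisive consequence of $\beta_1<\beta_+$ together with $2A=\beta_+-\beta_-$ is that $x+v_c(x)\to+\infty$. Splitting $\Delta(x)$ at $v_c(x)$: the positive piece is at most $\|h'\|_\infty\int_{v_c}^\infty\Psi_x\le\tfrac{\|h'\|_\infty}{\alpha\beta_-}\ee^{-\beta_1 v_c(x)}$ (using $w_-\ee^{-\beta_- v_c}+w_+\ee^{-\beta_+ v_c}=\ee^{-\beta_1 v_c}$ at the crossover), while the negative piece has modulus at least $\tfrac{\ee^{\beta_1 x}}{2\alpha}\int_{x+\delta_1 v_c(x)}^{x+\delta_2 v_c(x)}h'(u)\ee^{-\beta_1 u}\,\dd u$ for fixed $0<\delta_1<\tfrac{\beta_1-\beta_-}{2A}<\delta_2<1$ (on $[\delta_1 v_c,\delta_2 v_c]$ one has $g_x\ge\tfrac12$ for $x$ small, and this $u$-window exhausts $\mathbb R$ since $x+\delta_1 v_c\to-\infty$, $x+\delta_2 v_c\to+\infty$). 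The ratio of the two bounds is $O(\ee^{-\beta_1(x+v_c(x))})$ over a factor bounded away from $0$, hence $\to 0$; so $\Delta(x)<0$ for all sufficiently negative $x$, which is the second implication with an appropriate $x_-$.

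\emph{Main obstacle.} The hard part is exactly the small-$x$ estimate: the reward structure imposes only $h'\to 0$ at $\pm\infty$ with no prescribed rate, so the comparison of the positive and negative parts of $\Delta$ must be made uniformly in the a priori arbitrary tail of $h'$. What rescues it is that the ``risky'' weight $w_-(x)$ carries an extra exponential factor $\ee^{2Ax}$; combined with $2A=\beta_+-\beta_-$ and $\beta_+>\beta_1$ this forces the sign-change abscissa to satisfy $x+v_c(x)\to+\infty$, which is the precise sense in which the spread of the second arm cannot compensate for its mean-reverting drift once the instantaneous reward is small.
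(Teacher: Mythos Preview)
Your argument is correct and takes a genuinely different route from the paper's.

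\emph{Large $x$.} You recognize $M^{(2)}$ as a true convex combination (the identity $w_-+w_+=1$, following from $c_-c_+=1$, is the key observation) and then apply Jensen's inequality for $t\mapsto \ee^{-tv}$: once the barycentre $\bar\beta(x)=w_-\beta_-+w_+\beta_+$ drops below $\beta_1$ the kernel $\Psi_x$ is pointwise positive. The paper instead keeps the decomposition in the form $\rho_0\Delta=\int h'(\ee^{-(B-A)u}-\ee^{-B_0u})\{1-\tfrac{y}{1+y}[1+R(u)]\}\dd u$ and bounds the ratio $R(u)$ via the mean value theorem by the constant $K=\tfrac{(B+A)-B_0}{B_0-(B-A)}$; positivity of the brace then reduces to $Ky(x)<1$, giving the explicit threshold $x_1=\tfrac{1}{2A}\log[\sigma_2^2(B-A)^2K/(2\alpha)]$. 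Your convexity argument is shorter and more conceptual, and the threshold it produces (the unique solution of $\bar\beta(x)=\beta_1$) is at least as sharp as the paper's.

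\emph{Small $x$.} Here the two proofs diverge more substantially. The paper dispatches this case in one line by symmetry: the same MVT manipulation applies with the roles of $S_+$ and $S_-$ interchanged and $y$ replaced by $z(x)=\tfrac{2\alpha}{\sigma_2^2(B+A)^2}\ee^{2Ax}$. You instead carry out a direct asymptotic analysis of the single sign change $v_c(x)$ of $\Psi_x$, showing $v_c(x)\sim \tfrac{2A}{\beta_1-\beta_-}|x|$ and, crucially, $x+v_c(x)\to+\infty$ because $2A=\beta_+-\beta_->\beta_1-\beta_-$. The comparison of the positive tail against the negative bulk is then clean: the former is $O(\ee^{-\beta_1(x+v_c)})$ while the latter is bounded below by a fixed positive integral once the window $[x+\delta_1 v_c,\,x+\delta_2 v_c]$ swallows any prescribed compact set. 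This is heavier than the paper's treatment but has the merit of making transparent exactly where the right inequality in \eqref{DIAGRA} (i.e.\ $\beta_1<\beta_+$) enters, and of handling the arbitrary tail behaviour of $h'$ without recourse to the MVT device.

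In short: your large-$x$ half is an improvement in elegance; your small-$x$ half trades the paper's brevity for a more explicit mechanism.
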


\noindent
As will become clear from the proof, if the parameters do not satisfy Eq.\eqref{DIAGRA}, then the sign of the difference $\Delta$ remains constant over all $\mathbb{R}.$ 
If $\frac{\sigma_2}{\sigma_1}$ is smaller than the lower-bound, then $M^{(1)}(x)>M^{(2)}(x)$ for every $x \in \mathbb{R}$. Conversely, if $\frac{\sigma_2}{\sigma_1}$ lies above the upper-bound then $M^{(2)} (x) > M^{(1)}(x)$ for every $x \in \mathbb{R}.$ The various regimes can be summarized in the following phase diagram. 

\begin{figure}[ht!]
  \begin{center}
   \def\svgwidth{0.9\columnwidth} 
    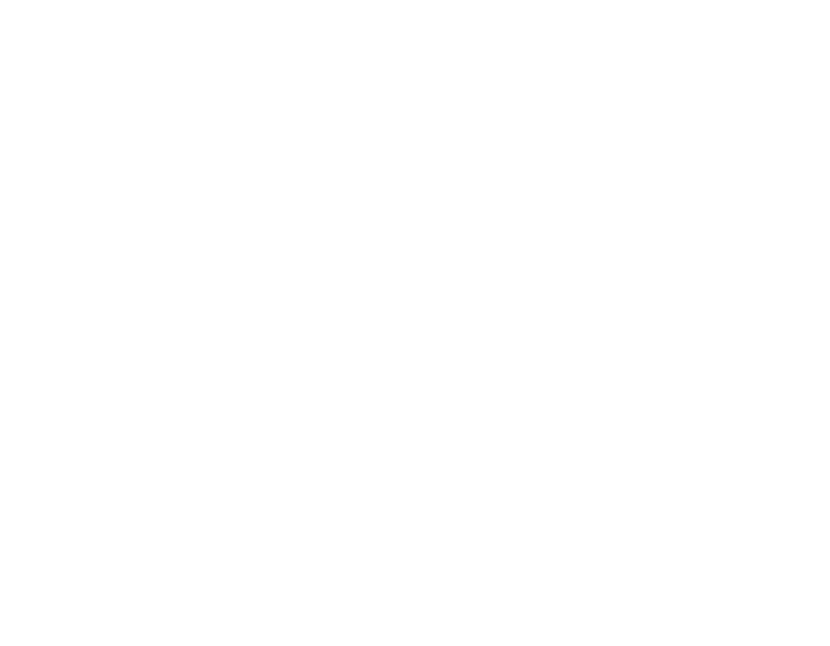
   \end{center}
    \caption{Phase diagram of Theorem \ref{OPTALOC}. In the regimes covered by region (a), the gambler should always favor the Brownian motion $X^{(1)}$. Conversely, in the regimes covered by region (c), the gambler should always favor the DMPS $X^{(2)}$. For instance, if $\Gamma=0$, both arms are Gaussian and the gambler should always favor $X^{(1)}$ over $X^{(2)}$ if $\sigma_1>\sigma_2$, and vice-versa (see Corollary \ref{CG4}). On the other hand, the regions ($\text{b}_1$) and ($\text{b}_2$) exhibit mixed behaviors: for instance in region ($\text{b}_1$), although the first arm $X^{(1)}$ has more volatility than the second arm $X^{(2)}$ (i.e.\  $\sigma_1>\sigma_2$), it is not true that the gambler should always favor the former. Indeed above some threshold value $x_+ \in \mathbb{R}$, representing a certain level of wealth, the DMPS is more favorable than the Brownian motion. In region ($\text{b}_2$), $X^{(2)}$ is more volatile than $X^{(1)}$: nevertheless, after seeing her wealth dropping below a certain threshold $x_- \in \mathbb{R}$, the gambler should favor the arm $X^{(1)}$.
    This mix of behaviors highlights the key difference between Gaussian diffusions, where one should always choose the more volatile arm, and the case of the non-Gaussian case, where the optimal allocation depends on the current state. Risk and volatility compete in a non-trivial way. }
    \label{FIGURE1}
 \end{figure}

\begin{proof}[Proof of Theorem \ref{OPTALOC}]
\noindent Define the indices difference:

\begin{equation*}
\Delta (x) \deq M^{(2) }(x)- M^{(1)} (x) = M_{\alpha}(x)- M_{\alpha, \Gamma}(x), 
\end{equation*}

\noindent with the Gittins' indices $M_{\alpha}(x)$ and $M_{\alpha, \Gamma}(x)$ respectively given by Eq.(\ref{PUREBrownian motion}) with $\sigma = \sigma_1$ and Eq.(\ref{GITTINS-DMPS}) with $\sigma = \sigma_2$. Completing the notations given in  Eqs.(\ref{ABDEF}) and Eq.(\ref{GITTINS-DMPS}), we write:

\begin{equation*}
\left\{
\begin{array}{l}
S_{0} \deq \int_{0}^{\infty} h\left[ x + \frac{z}{B_0}\right] \ee^{-z} \dd z,
\quad {\rm and } \quad 
S_{\pm} \deq \int_{0}^{\infty} h \left[ x \pm \frac{z}{B \pm A} \right] \dd z, \\\\

\rho_0\deq \frac{1}{\alpha}   \qquad {\rm and } \qquad  \rho_{\pm} \deq \frac{2}{2 \alpha + \sigma_{2}^{2} ( B\pm A)^{2} \ee^{\pm A x} },

\\\\

B_0 \deq \frac{\sqrt{2 \alpha}}{\sigma_1} , \qquad A\deq \frac{\sqrt{2\Gamma }}{\sigma_2} , \qquad {\rm and} \quad B\deq \frac{\sqrt{2(\alpha + \Gamma)} }{\sigma_2}.
\end{array}
\right.
\end{equation*}

\noindent By Eq.\eqref{DIAGRA}, we see that $B-A < B_0<B+A$. Moreover, the parameters $A$ and $B$ satisfy the identity: 
$$
\frac{\sigma_{2}^{2} (B-A)^{2}}{2 \alpha } = \frac{2 \alpha} {\sigma_{2}^{2} (B+A)^{2}}.
$$

\noindent Defining $y(x) \deq \frac{\sigma_{2}^{2} (A-B)^{2}}{2 \alpha} \ee^{-2Ax}$, we have:

\begin{equation*}
\rho_{-}(x) \deq \frac{\rho_{0}}{1+y(x)} \quad {\rm and} \quad\rho_+ (x)= \frac{\rho_{0}}{ 1+\frac{1}{y(x)}} = y(x)\rho_-(x).
\end{equation*}

\noindent Using the above equation, we find:

\begin{align}\label{DELTAbis}
  \begin{split}
  \Delta (x) &=\rho_-(x) S_{-} (x)+\rho_+ (x)S_{+}(x) - \rho_{0} S_{0}(x) \\
  &=\rho_{0} \left\{ \left[ S_{-}(x) - S_{0}(x)\right] -  \frac{y(x)}{1+y(x)} \left[ S_{-}(x) - S_{+}(x)\right] \right\}.
  \end{split}
\end{align}

\noindent  For the quadratures $S_+ , S_-$ and  $S_{0} $, we respectively introduce the changes of variables  $z\deq (B+A) u$,  $z\deq (B-A) u$ and $z\deq B_0 u$. This together with integrations by parts enables one to write:

\begin{equation}
\label{RELSET}
\left\{
\begin{array}{l}
S_+(x) = (B+A) \int_{0}^{\infty} h(x+ u) \ee^{- (B+A) u } \dd u= \\ \qquad \qquad \qquad \qquad \qquad h(x) + \int_{0}^{\infty} \left[ \partial_{u} h(x+u)\right] \ee^{- (B+A) u} du, \\\\
S_-(x) = (B-A) \int_{0}^{\infty} h(x+ u) \ee^{- (B-A) u } \dd u= \\  \qquad \qquad \qquad \qquad \qquad h(x) + \int_{0}^{\infty} \left[ \partial_{u} h(x+u)\right] \ee^{- (B-A) u} du, \\\\
S_{0}(x) = B_0 \int_{0}^{\infty} h(x+ u) \ee^{- B_0 u } \dd u= \\ \qquad \qquad \qquad \qquad \qquad \qquad h(x) + \int_{0}^{\infty} \left[ \partial_{u} h(x+u)\right] \ee^{- B_0 u} du,
\end{array}
\right.
\end{equation}

\noindent Using Eq.(\ref{RELSET}), and the fact that $y(x) \geq 0$, one rewrites Eq.(\ref{DELTAbis}) as :

\begin{align}
\label{DELTA1}
\begin{split}
&\rho_0 \Delta (x) = \int_{0}^{\infty} \partial_{u} h(x+u)
\left\{ 
\ee^{-(B-A)u} - \ee^{-B_0u} - \frac{y(x)}{1 + y(x)} \left[ \ee^{-(B-A)u} - \ee^{-(B+A)u} \right] 
\right\} \dd u \\
&= \int_{0}^{\infty} \partial_{u} h(x+u)
\Bigl( \ee^{-(B-A)u} - \ee^{-B_0u} \Bigr) 
\left\{ 
  1   - \frac{y(x)}{1+y(x)} \biggl[ 1 +\frac{\ee^{-B_0u}- \ee^{- (B+A)u}}{\ee^{-(B-A)u} -\ee^{-B_0u} } \biggr]
\right\} \dd u.
\end{split}
\end{align}
\noindent Note that if $B_0\leq B-A$ the integrand in the first line of Eq.\eqref{DELTA1} is trivially non-positive. On the other hand, if $B+A<B_0$, then
$\ee^{-B_0u}- \ee^{- (B+A)u} \leq 0$ and looking at the last line in Eq.\eqref{DELTA1}, we see that $\Delta (x)\geq 0$, $x \in \mathbb{R}.$ 

\noindent Because the reward function is increasing, $y(x) \geq 0$ and $B-A \leq B_0$, we see that the sign of $\Delta$ is completely determined by the last term in the right-hand side of Eq.\eqref{DELTA1}. 
Let us introduce the parameter 
\begin{align}\label{paramK}
  K \deq \frac{(B+A)-B_0}{B_0 - (B-A)}.
\end{align}
The mean value theorem\footnote{As a reminder, the mean value theorem states that if $f$ is a differentiable function on $[a,b] \subset \mathbb{R}$, there exists a point $\zeta\in ]a,b[$ such that $f(b) - f(a) = f^{'}(\zeta)(b-a)$. In particular, by bounding the derivative $f^{'}(\zeta)$ from above (respectively from below), it is possible to bound from above (respectively from below) the difference $f(b) -f(a)$ by a linear function.} applied to the function $g_u(x)=\ee^{-ux}$ enables to write:
\begin{align}\label{MEANVALUETHM}
  1 +\frac{\ee^{-B_0u}- \ee^{- (B+A)u}}{\ee^{-(B-A)u} -\ee^{-B_0u}}
  = 1 + \frac{(B+A)-B_0}{B_0 - (B-A)}  \frac{u\ee^{-\zeta_1 u}}{u\ee^{-\zeta_2 u}}
  = 1 + K \frac{\ee^{-\zeta_1 u}}{\ee^{-\zeta_2 u}}, 
\end{align}
for some $B-A < \zeta_2 < B_0 < \zeta_1  < B+A.$\\
Choosing $\zeta_1=\zeta_2=B_0$ gives an upper bound on the right-hand side of Eq.\eqref{MEANVALUETHM}: 
\begin{align*}
  \begin{split}
    1 +\frac{\ee^{-B_0u}- \ee^{- (B+A)u}}{\ee^{-(B-A)u} -\ee^{-B_0u} }  
    &\leq 1+ K.
  \end{split}
\end{align*}
Plugging this inequality into Eq.\eqref{DELTA1} and using the fact that $y(x)\geq 0$, we find:
\begin{align*}
  \rho_0 \Delta (x) 
  &\geq 
  \int_0^{\infty} \partial_u h[x+u] \left[ \ee^{-(B-A) u } - \ee^{-B_0 u} \right] 
  \left[ 1 - \frac{(1+K) y(x)}{1+y(x)} \right] \dd u \\
  &= \int_0^{\infty} \partial_u h[x+u] \left[ \ee^{-(B-A) u } - \ee^{-B_0 u} \right] 
  \left[  \frac{1 - Ky(x)}{1+y(x)} \right] \dd u.
\end{align*}
Now observe that provided:
\begin{equation*}
  x_1 \deq \frac{1}{2A} \log \left[ \frac{\sigma_2^2(B-A)^2 K}{2\alpha} \right]\leq x,
\end{equation*} 
the integrand becomes strictly positive. Therefore, choosing $x_+= x_1$ we conclude. 
The proof for $x_{-}$ is similar: in that case, the first order contribution of $M^{(2)}$ comes from $S_+$ (instead of $S_-$ see Eq.\eqref{DELTAbis}) and the proof carries on, replacing $y$ by $z(x) \deq \frac{2\alpha}{\sigma_2^2(B+A)^2} \ee^{2Ax}$. 
\end{proof}

\vspace{0.3cm}

\noindent Let us conclude this paper with a few remarks. First let us consider the problem of determining $x_{\pm}$ when $h$ is explicitly given (one can think for instance of the function $h(x)=1-\mathrm{e}^{-cx}$, $c>0$). Then the exact positions of the thresholds $x_{\pm}$ can be calculated. 
In particular, at the initial position $X_0^{(1)}=X_0^{(2)}=0$, the two arms are compared via their Laplace transform. Indeed, let us denote by $\tilde{h}$ the Laplace transform of $h$ defined by:
\begin{align*}
  \tilde{h}(s) \deq \int_{0}^{\infty} h(u) \ee^{-su} \dd u, \quad s \in \mathbb{C}.
\end{align*}   
Then the difference of the GIs of $X^{(1)}$ and $X^{(2)}$ at time $t=0$ (recall the initial condition $X_0^{(1)}=X_0^{(2)}=0$) is given by:   
\begin{align*}
  \Delta (0) = \frac{2}{\sigma_2^2} 
  \left( \tilde{h} (B+A) + \tilde{h} (B-A) \right)  - \frac{2}{\sigma_1^2} \tilde{h} (B_0).
\end{align*}

\noindent Secondly, let us talk about the way the system evolves when we cross the border of the blue region. This transition happens continuously and is governed by the parameter   $K$ defined in Eq.\eqref{paramK}. Indeed, the proof of Theorem \ref{OPTALOC} can be slightly modified to obtain a lower bound on $x_+.$
The choice $\zeta_1=B-A$ and $\zeta_2=B+A$ gives a lower bound on the right-hand side of Eq.\eqref{MEANVALUETHM}:
 \begin{equation*}
   1 +\frac{\ee^{-B_0u}- \ee^{- (B+A)u}}{\ee^{-(B-A)u} -\ee^{-B_0u} }
   \geq 1+K \ee^{-2Au}.
 \end{equation*}
 Plugging this inequality into Eq.\eqref{DELTA1} and using the fact that $y(x)\geq0$  yields:
 \begin{align*}
   \rho_0 \Delta (x) \leq \int_0^{\infty} \partial_u h[x+u] \left[ \ee^{-(B-A) u } - \ee^{-B_0 u} \right] 
   \left[ \frac{1 - K y(x) \ee^{-2Au}}{1+y(x)} \right] \dd u.
 \end{align*}
 A sufficient condition for $\Delta(x)$ to be negative (i.e.\ a lower bound on $x_+$) requires an explicit form for the reward function $h$. Nevertheless, the above inequality suggests that the position of the threshold $x_+$ is determined by $\log K$. When the parameters get close to the upper bound in Eq.\eqref{DIAGRA}, that is when:
 \begin{align*}
   \frac{\sigma_2}{\sigma_1} \rightarrow \sqrt{1 + \frac{\Gamma}{\alpha}} + \sqrt{\frac{\Gamma}{\alpha}} 
   \Leftrightarrow 
   B_0 \rightarrow B+A 
   \Leftrightarrow
   K \rightarrow 0 
   \Leftrightarrow \log K \rightarrow - \infty,
 \end{align*}
 on expects that the threshold $x_+ \rightarrow -\infty$. Hence, $X^{(1)}$ is never engaged.  
 Conversely, when approaching the lower bound in Eq.\eqref{DIAGRA}, the denominator in the definition of $K$ goes to $0$ and on expects the threshold $x_+ \rightarrow +\infty$, leading to never engage $X^{(2)}$ is never favorable (see Figure \ref{FIGURE1}).

\noindent Finally, let us discuss the hypothesis $\frac{\Gamma}{\alpha}<2$.  This condition stems from the hypotheses of Theorem \ref{THM:GITTINS-DOOBS} which is itself a direct consequence of the hypotheses of Theorem \ref{KARA} (which can be traced back to the discussion in \cite[Section 3]{K}). It restricts the regimes in which we can rigorously study the two-arm bandit model. However, we believe this restriction is not a fundamental one. For the sake of brevity, we did not try to weaken the condition Eq.\eqref{KARA-CONDITION} to a local condition of the form:
\begin{align}\label{equ:KARACONDI_local}
  \exists x_+ \in \mathbb{R}, \, 0<\alpha_0\leq \alpha-m'(x)\leq \beta, 
  \, x\geq x_+.
\end{align}
We believe this can be achieved by adapting the proof in \cite{K}. Working with Eq.\eqref{equ:KARACONDI_local} would allow us to remove the restriction on $2\Gamma\leq\alpha$ since then Eq.\eqref{equ:KARACONDI_DMPS} holds for any $\Gamma, \alpha >0 $ as soon as $x$ is large enough.    

\bibliographystyle{plain}
\bibliography{bibliography}

\end{document}